\theoremstyle{definition}
\newtheorem{definition}{Definition}
\theoremstyle{plain}
\newtheorem{theorem}[definition]{Theorem}
\theoremstyle{plain}
\newtheorem{conjecture}[definition]{Conjecture}
\theoremstyle{plain}
\theoremstyle{plain}
\theoremstyle{plain}
\theoremstyle{definition}
\newtheorem{remark}[definition]{Remark}
\theoremstyle{plain}
\newtheorem{lemma}[definition]{Lemma}
\theoremstyle{plain}
\newtheorem{proposition}[definition]{Proposition}
\theoremstyle{plain}
\newtheorem{corollary}[definition]{Corollary}
\theoremstyle{definition}
\theoremstyle{definition}
\DeclareMathOperator{\hess}{Hess}
\DeclareMathOperator{\di}{div}
\newcommand{\R}{\mathbb{R}}
\newcommand{\N}{\mathbb{N}}
\newcommand{\lef}{\left(}
\newcommand{\rig}{\right)}
\DeclareMathOperator{\conv}{Conv}
\newcommand{\beq}{\begin{equation}}
\newcommand{\eeq}{\end{equation}}
\DeclareMathOperator{\dist}{dist}
\begin{document} 

\date{\today}

\address{Francesco Chini, Department of Mathematical Sciences, Copenhagen University.}
\email{chini@math.ku.dk}

\title{Simply connected translating solitons contained in slabs}
\keywords{Mean curvature flow, entropy, translating solitons, translaters, self-translaters, translators}
\thanks{The author was partially supported by the Villum Foundation's QMATH Centre.}

\author{Francesco Chini\\}

\maketitle

\begin{abstract}
In this work we show that $2$-dimensional, simply connected,  translating solitons of the mean curvature flow embedded in a slab of $\R^3$ with entropy strictly less than $3$ must be mean convex and thus, thanks to a result by J. Spruck and L. Xiao \cite{sx17},  are convex.
 Recently, such $2$-dimensional convex translating solitons have been completely classified in \cite{himw19a}, up to an ambient isometry, as  vertical plane, (tilted) grim reaper cylinders, $\Delta$-wings and bowl translater. These are all contained in a slab, except for the rotationally symmetric bowl translater. New examples by \cite{hmw19} show that the bound on the entropy is necessary.
\end{abstract}

\section*{Introduction}

In \cite{br16}, Brendle proved that any properly embedded $2$-dimensional mean curvature self-shrinker in $\R^3$ which is homeomorphic to an open subset of the sphere must be a round sphere, or a cylinder or a plane, solving two problems posed by Ilmanen (see 14 and 15 in \cite{il03}). In particular, it follows that the round sphere is the only closed, embedded shrinker with genus $0$. The main step in Brendle's paper was to first prove that any shrinker satisfying such a topological assumption must be mean convex and with polynomial area growth (his argument was partially inspired by \cite{ro95}). Then the conclusion follows from Theorem 10.1 in \cite{cm12}, which is a refinement of a theorem by Huisken  \cite{hu93} (see also \cite{hu90} for the closed case).

One cannot expect such a strong result  for translating solitons, even under the more restrictive topological assumption of being simply connected. In fact Hoffman, Mart\'in and White \cite{hmw19} recently constructed new examples of properly embedded translaters which are simply connected but are not mean convex. The most surprising one is the so-called \emph{pitchfork} translater which has entropy equal to $  3$ and is contained in a slab.

In these notes we will consider smooth $2$-dimensional translating solitons of the mean curvature flow in $\R^3$, i.e. smooth surfaces immersed in $\R^3$ satisfying the equation
\begin{equation}\label{translating_equation}
H = - \langle \nu, e_3 \rangle,
\end{equation}
where $\nu$ is a smooth unit normal vector field on $\Sigma$ and $H$ denotes the mean curvature. Note that $\Sigma$ satisfies \eqref{translating_equation} if and only if the 1-parameter family $\Sigma + t e_3$ is a mean curvature flow, with $t \in \R$. 

Following \cite{cm12} denote the entropy of $\Sigma$ by $\lambda(\Sigma)$ (see Appendix \ref{appendix} for details).

The main contribution of this work is the following result.

\begin{theorem}\label{theorem_slab}
Let $\Sigma^2 \subseteq \R^3$ be a complete, embedded, translater satisfying the following assumptions:

\begin{enumerate}[(i)]
\item $\Sigma$ is simply connected,
\item $\lambda(\Sigma) < 3$,
\item $\Sigma$ is contained in a slab. 
\end{enumerate}

Then $\Sigma$ is mean convex.
\end{theorem}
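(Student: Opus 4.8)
The plan is to argue by contradiction: if $\Sigma$ is not mean convex we will show it has ``too many sheets'' inside the slab, forcing $\lambda(\Sigma)\ge 3$. Recall that $v:=\langle\nu,e_3\rangle=-H$ is a Jacobi field on $\Sigma$: writing $\mathcal{L}:=\Delta_\Sigma+\langle\nabla_\Sigma\,\cdot\,,e_3\rangle$ for the drift Laplacian, which is self-adjoint with respect to the weighted area $e^{x_3}\,dA$, one has $\mathcal{L}H+|A|^2H=0$. Assume $\Sigma$ is not mean convex. If $H\le 0$ everywhere then, since $\mathcal{L}H=-|A|^2H\ge 0$, the strong maximum principle forces either $H<0$ everywhere, so $\Sigma$ is mean convex, or $H\equiv 0$, in which case $\langle\nu,e_3\rangle\equiv 0$ makes $\Sigma$ a vertical plane and again mean convex; both contradict the assumption. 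Hence $H$ changes sign and $\Sigma_0:=\{H=0\}$ is nonempty. By the standard structure theory for nodal sets of solutions of linear second order elliptic equations, $\Sigma_0$ is a locally finite embedded graph in $\Sigma$: a smooth embedded $1$-manifold away from the isolated points where $\nabla_\Sigma H$ also vanishes, and at each of those an even number $\ge 4$ of smooth arcs meet at equal angles. On each component $\Omega$ of $\Sigma\setminus\Sigma_0$ the function $|H|$ is a positive solution of $\mathcal{L}u+|A|^2u=0$ vanishing on $\partial\Omega$, so $\Omega$ is \emph{stable}: $\int_\Omega(|\nabla_\Sigma\phi|^2-|A|^2\phi^2)\,e^{x_3}\,dA\ge 0$ for all $\phi\in C_c^\infty(\Omega)$.

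Next I bring in the slab hypothesis. Containment in a slab together with (ii) gives at most quadratic Euclidean area growth, $\operatorname{Area}(\Sigma\cap B_R)\le CR^2$, and, combined with the structure theory for translaters trapped between two parallel planes (in the spirit of Spruck--Xiao \cite{sx17}, using only the part that does not already presuppose mean convexity), each end of $\Sigma$, suitably translated downward, subconverges to a vertical plane, a grim reaper cylinder, or a configuration of vertical half-planes inside the slab. Feeding the quadratic area bound and a Simons-type inequality for $|A|$ on translaters into a Fischer-Colbrie--Schoen-type logarithmic cutoff argument on a stable nodal domain $\Omega$, one concludes that $\Omega$ is asymptotically flat along each of its ends; in particular every end of $\Sigma$ is a single embedded sheet on which $\langle\nu,e_3\rangle$ has a well-defined limiting sign.

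Finally I count sheets. Because $\Sigma$ is simply connected, the graph $\Sigma_0$ is, away from its compact pieces, tree-like, and its non-compact arcs are exactly what separates the ends on which $H>0$ from those on which $H<0$. Since $H$ genuinely changes sign, at least one nodal arc is proper and exits the slab as $x_3\to+\infty$; tracking the sheets of $\Sigma$ across such an arc, and using that a mean convex end in a slab has limiting unit normal pointing into the slab, one sees that a non-mean-convex $\Sigma$ must have at least three distinct sheets escaping to $x_3\to+\infty$. Hence the blow-down of $\Sigma$ is a vertical plane with multiplicity $\ge 3$, so $\lambda(\Sigma)\ge 3\,\lambda(\text{plane})=3$, contradicting (ii). Therefore $\Sigma$ is mean convex.

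The main obstacle is the combination of the last two steps: obtaining the asymptotic description of the ends \emph{without} assuming mean convexity (so \cite{sx17} cannot be quoted verbatim), ruling out nodal arcs that fail to be proper (such as arcs spiralling inside a bounded region of the slab), and making rigorous the bound ``non-mean-convex $\Rightarrow\ \ge 3$ sheets at $x_3=+\infty$''. It is exactly this last point where the simple connectivity of $\Sigma$ and the sharp constant $3$ enter; the entropy-$3$ pitchfork translater of \cite{hmw19}, which is simply connected, contained in a slab, and not mean convex, shows that $\lambda(\Sigma)<3$ cannot be relaxed.
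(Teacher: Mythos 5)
Your overall strategy---argue by contradiction, produce ``too many sheets'' going up inside the slab, and conclude $\lambda(\Sigma)\ge 3$ from multiplicity---matches the paper's, and your preliminary reductions (that $H$ must change sign, that nodal domains of $H$ are stable) are correct. But the two steps you yourself flag as ``the main obstacle'' are precisely the content of the theorem, and as set up they would not go through. First, you run the nodal-set analysis on $\{H=0\}$. The paper uses $\{H=0\}$ only once (Lemma \ref{lemma_no_planar}), to produce a single point $p$ with $H(p)=0$ and $T_p\Sigma$ transversal to the slab boundary, and then analyzes $Z=\Sigma\cap T_p\Sigma$, the nodal set of the \emph{linear} function $x\mapsto\langle\nu(p),x-p\rangle$. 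This choice is essential: $T_p\Sigma\cap\mathcal{S}$ is a vertical strip, so $x_1,x_2$ are bounded on $Z$ and every proper arc of $Z$ must escape to $x_3=\pm\infty$. Nothing of the sort holds for arcs of $\{H=0\}$, which can run off horizontally inside the slab, so your claim that ``at least one nodal arc exits the slab as $x_3\to+\infty$'' has no justification. Second, the asymptotic input you attribute to a Spruck--Xiao-type structure theory ``without presupposing mean convexity'' is not available off the shelf; the paper has to prove it from scratch (Theorem \ref{new_refinement} and Corollary \ref{corollary_of_proposition_refinement}), via the Omori--Yau maximum principle combined with barrier arguments using grim reaper cylinders and $\Delta$-wings, and this occupies the longest section of the paper.

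Most importantly, the implication ``not mean convex $\Rightarrow$ at least three sheets at $x_3=+\infty$'' cannot be closed by a single entropy argument: the genuinely hard case is when exactly two rays of $Z$ go up, and there the contradiction comes not from entropy but from topology --- one uses Lemma \ref{refinement_Z} and Lemma \ref{corollary_topology} (each component of $Z$, and of $\Sigma\cap\mathcal{H}$ for a vertical halfspace $\mathcal{H}$, is simply connected) to build a non-contractible loop in $\Sigma$, contradicting assumption (i). Only in the case of at least three upward rays does one translate down, extract a limit using the Schoen--Simon curvature estimate (Proposition \ref{curvature_estimate}; stability of nodal domains is never needed), obtain the two boundary planes of the slab plus a third component or a multiplicity, and invoke additivity of entropy for translaters (Corollary \ref{corollary_additivity}) to get $\lambda(\Sigma)\ge 3$. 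So the gap is not a technicality: the case dichotomy and the entire topological half of the argument are missing, and the choice of nodal set has to be changed for the analytic half to start.
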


Spruck and Xiao proved that $2$-dimensional, mean convex translaters are actually convex (Theorem 1.1 in \cite{sx17}). Therefore their result together with the classification of Hoffman, Ilmanen, Mart\'in and White \cite{himw19a}, yields the following corollary. 

\begin{corollary}
Let $\Sigma$ be as in Theorem \ref{theorem_slab}. Then $\Sigma$ is, up to an ambient isometry, one of the following translating solitons:
\begin{enumerate}[(i)]
\item a vertical plane,
\item a grim reaper cylinder (possibly tilted),
\item a $\Delta$-wing translater. 
\end{enumerate}
\end{corollary}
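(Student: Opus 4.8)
The plan is to deduce the corollary from Theorem~\ref{theorem_slab} together with two results already in the literature, discarding afterwards the one model surface that cannot lie in a slab; this is a short formal argument involving no new analysis. The first step is just to apply Theorem~\ref{theorem_slab}: a surface $\Sigma$ satisfying (i)--(iii) is mean convex, say with $H\ge 0$ after reversing the orientation if necessary. This is the only substantive input, and proving it is what occupies the bulk of the paper.

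Granted mean convexity, the second step is to invoke the theorem of Spruck and Xiao (Theorem~1.1 in \cite{sx17}): a complete $2$-dimensional translater in $\R^3$ with nonnegative mean curvature is convex. Its hypotheses are exactly what is available here --- $\Sigma$ is complete by assumption and mean convex by the first step --- so $\Sigma$ is convex; being convex it bounds a convex body and is in particular properly embedded. The third step is to quote the classification of complete convex translating solitons of $\R^3$ due to Hoffman, Ilmanen, Mart\'in and White \cite{himw19a}: up to an ambient isometry, such a surface is a vertical plane, a (possibly tilted) grim reaper cylinder, a $\Delta$-wing, or the rotationally symmetric bowl translater. To finish, one observes that the bowl is an entire convex graph over $\R^2$ asymptotic to a paraboloid, hence unbounded in every horizontal direction and therefore contained in no slab, whereas $\Sigma$ lies in a slab by (iii); so $\Sigma$ is not the bowl, leaving precisely (i)--(iii). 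Each of the three remaining families does lie in a suitable, possibly tilted, slab, so the list cannot be shortened.

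The only points needing attention are bookkeeping ones: checking that the hypotheses of \cite{sx17} and \cite{himw19a} are genuinely met here (completeness, dimension two, ambient space $\R^3$, and whatever properness or embeddedness those statements presuppose), and recording that ``grim reaper cylinder'' in \cite{himw19a} covers both the untilted and the tilted cases, so that item (ii) is stated accurately. There is no analytic obstacle in this deduction: all of the difficulty is already concentrated in Theorem~\ref{theorem_slab}, whose proof --- a nodal-set and entropy analysis of the mean curvature function $H=-\langle\nu,e_3\rangle$ --- is where the real work lies.
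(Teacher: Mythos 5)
Your argument matches the paper's: the corollary is deduced exactly as you describe, by combining Theorem~\ref{theorem_slab} with Spruck--Xiao's convexity theorem and the Hoffman--Ilmanen--Mart\'in--White classification, then discarding the rotationally symmetric bowl because it is not contained in any slab. The paper does not give a separate formal proof but states this chain of deductions in the text surrounding the corollary, and your bookkeeping remarks are accurate.
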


\begin{remark}
As mentioned above, the pitchfork example shows that the bound on the entropy in the assumptions of Theorem \ref{theorem_slab} is necessary and cannot be relaxed. 

On the other hand, there are currently no known examples of complete translaters contained in a slab with entropy strictly less than $3$ which are not simply connected. So it is not clear whether the topological assumption is necessary.
Hershkovits \cite{he18} classified translaters with entropy less or equal than the entropy of a cylinder without any further assumptions. More precisely, he proved that a translater $\Sigma^2 \subseteq \R^3$ satisfying the following entropy bound 
\begin{equation}\label{her_entropy_bound}
\lambda(\Sigma) \le \lambda(\mathbb{S}^1 \times \R) = \sqrt{\frac{2 \pi}{e}} \approx 1.52
\end{equation}
must be either a plane ($\lambda(\Sigma) = 1$) or the rotationally symmetric bowl translater ($\lambda(\Sigma) = \sqrt{\frac{2 \pi}{e}}$). However, even though Hershkovits does not need any toplogical assumption, his bound \eqref{her_entropy_bound} is much more restrictive than the entropy bound in our Theorem \ref{theorem_slab}. In a later work, Hershkovits, Haslhofer and Choi  \cite{hhc18} completely classified $2$-dimensional ancient mean curvature flows with entropy less or equal to $\lambda(\mathbb{S}^1 \times \R)$ and they used this classification to prove the mean convex neighborhood conjecture (see also the very recent paper \cite{hhcw19} for a higher dimensional analog).

Moreover, we believe that the assumption $(iii)$ in Theorem \ref{theorem_slab} is purely technical and  can be removed. 
\end{remark}

\begin{conjecture}\label{low_entropy_theorem}
Let $\Sigma^2 \subseteq \R^{3}$ be an embedded simply connected translater such that $\lambda(\Sigma) <3$. 

Then $\Sigma$ is mean convex. 
\end{conjecture}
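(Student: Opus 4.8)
The plan is to argue by contradiction: supposing $\Sigma$ is not mean convex, I will exhibit a violation of $\lambda(\Sigma)<3$. Since $\Sigma$ solves \eqref{translating_equation} it is minimal for the conformally flat metric $e^{x_3}\delta_{ij}$, and $H=-\langle\nu,e_3\rangle$ is the normal part of the Killing field $e_3$; hence $H$ satisfies the Jacobi equation $\mathcal{L}H:=\mathcal{L}_0H+|A|^2H=0$, where $\mathcal{L}_0:=\Delta_\Sigma+\langle\nabla^\Sigma\cdot\,,e_3^\top\rangle$ is the drift Laplacian, which is self-adjoint with respect to $d\mu_w:=e^{x_3}d\mu$. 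If $H$ had a sign and a zero, the strong maximum principle for $\mathcal{L}$ would give $H\equiv 0$, i.e. $\Sigma$ a vertical plane, which is mean convex; so we may assume $H$ takes both signs.

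\emph{Reductions from the slab and entropy hypotheses.} The identity $\mathcal{L}_0x_3=\langle\vec H,e_3\rangle+|e_3^\top|^2=H^2+(1-H^2)=1>0$ shows $x_3$ has no interior maximum on $\Sigma$. The entropy bound yields, via Huisken monotonicity, area-ratio bounds, locally bounded geometry, parabolicity of $\Sigma$, finitely many ends, and each end graphical and asymptotic after rescaling to a vertical plane. A Liouville argument then forces $x_3$ unbounded above, and analogous $\mathcal{L}_0$-estimates on $\langle x,v\rangle$ for vectors $v$ with nonzero vertical component force the slab of (iii) to be \emph{vertical}, say $\Sigma\subseteq\{|x_1|\le C\}$. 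Consequently the blow-down $\lim_{\lambda\to0}\lambda\Sigma$ exists as an integral varifold supported in $P:=\{x_1=0\}$, i.e. a union of half-planes/rays in $P$ with total multiplicity $m\in\mathbb{N}$; localizing the Gaussian density at basepoints escaping to infinity and using that these $m$ sheets stay within a fixed-width slab (mutual distances $\le 2C$, negligible compared with the distance to the origin) gives $\lambda(\Sigma)\ge m$. Hence $m\le 2$.

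\emph{Nodal analysis.} Study $\mathcal{N}:=\{H=0\}=\{\langle\nu,e_3\rangle=0\}$, which is exactly the fold set of the vertical projection $\pi\colon\Sigma\to\{x_3=0\}$: it is a locally finite union of curves, properly embedded away from a discrete set of crossing points, and on each component $\Omega$ of $\Sigma\setminus\mathcal{N}$ the sign of $\langle\nu,e_3\rangle$ is constant, so $\Omega$ is locally graphical over $\{x_3=0\}$. Using the Jacobi equation for $H$, the weight $e^{x_3}$, and the simple connectivity of $\Sigma$, one controls the topology of the nodal domains — in particular each is non-compact, so (since $\mathcal{N}$ is proper) the fold separating up-facing from down-facing parts of $\Sigma$ runs out to infinity. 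Since $H$ changes sign there is an unbounded up-facing domain $\Omega^+$ (where $\langle\nu,e_3\rangle>0$) and an unbounded down-facing one $\Omega^-$.

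\emph{The counting, and the main obstacle.} The crux is to upgrade ``$H$ changes sign'' to ``$\Sigma$ has at least three asymptotic sheets over $P$'', i.e. $m\ge 3$. The idea: since $x_3$ is unbounded above without a maximum, the ends responsible for this are, by the reduction step, graphical of grim-reaper/plane type, on which $\langle\nu,e_3\rangle\le 0$, and they account for two sheets of the blow-down; the up-facing region $\Omega^+$, whose bounding fold is proper and hence reaches infinity, must then contribute at least one further sheet, because it cannot be absorbed into the two down-facing outer sheets without the projection $\pi$ acquiring extra multiplicity there. Making this rigorous requires a global analysis of how the nodal domains of $H$ are glued along $\mathcal{N}$ and of the behaviour of each piece at infinity inside the slab, together with a robust monotonicity/density argument converting ``three asymptotic sheets'' into $\lambda(\Sigma)\ge 3$, contradicting (ii). This is the hard part; both the specific constant $3$ and its sharpness — the pitchfork realizes exactly this three-sheeted configuration — live here. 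The secondary technical burden is to supply the tameness inputs used above (verticality of the slab, bounded geometry and parabolicity, and the planar structure of the blow-down), which I would draw from the slab-translator results and the entropy estimates referenced in the introduction and the appendix.
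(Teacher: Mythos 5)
First, note that what you were asked to prove is stated in the paper as Conjecture~\ref{low_entropy_theorem}, not a theorem: the paper only proves the version with the extra hypothesis that $\Sigma$ lies in a slab (Theorem~\ref{theorem_slab}) and explicitly says that removing the slab assumption is believed possible but is left open. Your write-up in fact \emph{re-introduces} that hypothesis --- the passage headed ``Reductions from the slab and entropy hypotheses'' leans on ``the slab of (iii)'' to obtain a vertical slab $\{|x_1|\le C\}$ and a planar blow-down. Without a slab, $\conv(\pi(\Sigma))$ could be a half-plane or all of $\R^2$ (Remark~\ref{remark_asymptotic_planes}), and nothing in your sketch rules those cases out. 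So at best you are attempting Theorem~\ref{theorem_slab}, not the conjecture.

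Even in the slab case, the proposal is a sketch whose central step is unresolved, and you say so yourself (``Making this rigorous requires a global analysis \dots this is the hard part''). The upgrade from ``$H$ changes sign'' to ``$\lambda(\Sigma)\ge 3$'' is precisely where the paper does the work, and it does it by a different mechanism than the nodal/sheet count you gesture at. The paper fixes a point $p$ with $H(p)=0$, studies $Z=\Sigma\cap T_p\Sigma$ --- the nodal set of the linear height function $\langle \nu(p),\cdot-p\rangle$, via Lemma~\ref{structure_Z}/Cheng's theorem rather than the nodal set of $H$ --- and counts rays of $Z$ whose $x_3$-coordinate goes to $+\infty$. If there are $\ge 3$ such rays, translating $\Sigma$ downward by $k\to\infty$, using the curvature estimate of Proposition~\ref{curvature_estimate} plus Arzel\`a--Ascoli, and invoking entropy additivity (Corollary~\ref{corollary_additivity}) yields $\lambda(\Sigma)\ge 3$. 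If there are $\le 2$ such rays, the contradiction is \emph{topological}, from simple connectivity via Lemmas~\ref{refinement_Z} and~\ref{corollary_topology} --- your proposal has no counterpart to this case, and without it the dichotomy ``$m\le 2$ vs.\ $m\ge3$'' cannot be closed. Finally, the asymptotic input you call a ``Liouville argument'' (that the surface must run to $x_3\to+\infty$ near each boundary plane of the slab) is established in the paper as Theorem~\ref{new_refinement}/Corollary~\ref{corollary_of_proposition_refinement} via Omori--Yau together with grim-reaper and $\Delta$-wing barriers; asserting it is not proving it.
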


\begin{remark}\label{remark_stable_trans}
Simply connected translating solitons are particularly interesting because  it is known (see \cite{ir17}, \cite{ir19}, \cite{ks18}) that complete $2$-\!\! dimensional stable translaters in $\R^3$ must be simply connected. By \emph{stable translaters} we mean translaters which are linearly stable as minimal surfaces w.r.t. to the conformally flat metric $e^{x_3} \delta_{ij}^{\text{Eucl}}$ (for more details see the survey \cite{himw19b}). 

Observe that for shrinking solitons there is a connection between \emph{stability} and \emph{mean convexity}. 
More precisely, Colding and Minicozzi \cite{cm12} proved that entropy stable shrinkers (with polynomial volume growth) are mean convex. Entropy stability is intimately related with the index of the stability operator of shrinkers as minimal surfaces in the gaussian metric $e^{-\frac{|x|^2}{4}}\delta^{\text{Eucl}}_{ij}$. For these reasons and motivated by Theorem \ref{theorem_slab} , we are tempted to state the following conjecture.  
\end{remark}

\begin{conjecture}
Let $\Sigma^2 \subseteq \R^3$ be a complete embedded stable translater. Then $\Sigma$ is mean convex and therefore, thanks to \cite{sx17} and \cite{himw19a}, up to an ambient isometry,  one of the following translating solitons:
\begin{enumerate}[(i)]
\item a vertical plane,
\item a grim reaper cylinder (possibly tilted),
\item a $\Delta$-wing translater,
\item the rotationally symmetric bowl translater.
\end{enumerate}
\end{conjecture}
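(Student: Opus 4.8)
The plan is to follow, in the translater setting, the philosophy recalled in Remark \ref{remark_stable_trans}: just as Colding and Minicozzi show that entropy-stable self-shrinkers are mean convex by exploiting that the mean curvature is a Jacobi field, here one should exploit the same fact for translaters, working with the Ilmanen metric $e^{x_3}\delta_{ij}$, with respect to which translaters are precisely the minimal surfaces and the stability hypothesis of the statement is precisely minimal-surface stability. First I would dispose of the flat case: a \emph{minimal} translater has $H = -\langle\nu,e_3\rangle \equiv 0$, hence $\nu$ is everywhere horizontal and $\Sigma$ is a vertical plane, which is trivially mean convex. So assume $\Sigma$ is not flat; then by \cite{ir17}, \cite{ir19}, \cite{ks18} it is simply connected and of finite total curvature -- this is the control at infinity the argument will rely on. Since reversing orientation reverses the sign of $H$, the assertion ``$\Sigma$ is mean convex'' means exactly ``$H$ does not change sign on $\Sigma$'', and this is what I would prove.

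The crucial structural fact is that $H$ is a Jacobi field: since $\Sigma + te_3$ is a family of translaters, its normal speed $-H$ lies in the kernel of the translater Jacobi operator $\mathcal L_\Sigma := \laps + \langle (e_3)^\top, \nablas\,\cdot\,\rangle + |A|^2$, i.e.\ $\mathcal L_\Sigma H = 0$ (equivalently, $\mathcal L_\Sigma$ is the stability operator of $\Sigma$ as a minimal surface of the Ilmanen metric, for which $\partial_{x_3}$ is a homothetic vector field). Moreover $|H| = |\langle\nu,e_3\rangle| \le 1$ everywhere. The operator $\mathcal L_\Sigma = e^{-x_3}\diverg_\Sigma(e^{x_3}\nablas\,\cdot\,) + |A|^2$ is self-adjoint in $L^2(\Sigma, e^{x_3}\,dA)$, and stability of $\Sigma$ means that its quadratic form $Q(\varphi) = \int_\Sigma\big(|\nablas\varphi|^2 - |A|^2\varphi^2\big)\,e^{x_3}\,dA$ is nonnegative for all $\varphi \in C_c^\infty(\Sigma)$. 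Nonnegativity of $\lambda_1(\mathcal L_\Sigma, \Omega')$ on every precompact $\Omega' \subseteq \Sigma$ then yields, by the classical argument of Fischer-Colbrie and Schoen, a global positive solution $w > 0$ of $\mathcal L_\Sigma w = 0$ on $\Sigma$.

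Now suppose, for contradiction, that $H$ changes sign, and let $\Omega$ be a connected component of $\{H \ne 0\}$, so that (after relabelling) $H > 0$ in $\Omega$, $H = 0$ on $\partial\Omega \ne \emptyset$, and $\mathcal L_\Sigma H = 0$ in $\Omega$. If $\Omega$ is precompact this already contradicts stability: $H|_\Omega$ is then a first Dirichlet eigenfunction of $\mathcal L_\Sigma$ on $\Omega$ with eigenvalue $0$, so $\lambda_1(\mathcal L_\Sigma,\Omega) = 0$, and strict domain monotonicity applied to any precompact $\Omega' \supsetneq \overline\Omega$ with smooth boundary gives $\lambda_1(\mathcal L_\Sigma,\Omega') < 0$, producing a compactly supported test function with $Q < 0$. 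The substantive case -- and the main obstacle -- is when $\Omega$ is noncompact. The Fischer-Colbrie--Schoen route is to study $\phi := H/w$: from $\mathcal L_\Sigma H = \mathcal L_\Sigma w = 0$ one obtains the divergence-form identity $\diverg_\Sigma\!\big(w^2 e^{x_3}\,\nablas\phi\big) = 0$, so $\phi$ is weighted-harmonic for the measure $w^2 e^{x_3}\,dA$. Finite total curvature makes $\Sigma$ conformally a closed surface with finitely many punctures, hence parabolic; the hard part is to turn this, together with growth estimates for $w$ and the precise asymptotics of $\Sigma$ at each end, into a Liouville theorem forcing $\phi$ to be constant. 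Granting that, $H = \phi\,w$ has a fixed sign, contradicting that $H = 0$ on the nonempty set $\partial\Omega$ while $H > 0$ in $\Omega$. Hence $\Sigma$ is mean convex.

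Once mean convexity is established the conclusion follows from cited work: by Theorem 1.1 of \cite{sx17} a mean convex $2$-dimensional translater is convex, and by the classification in \cite{himw19a} a complete embedded convex translater is, up to an ambient isometry, a vertical plane, a (possibly tilted) grim reaper cylinder, a $\Delta$-wing, or the rotationally symmetric bowl translater; conversely each of these four is complete, embedded and stable, so the list is exactly the stated one. The principal difficulty, as indicated, is the Liouville step for the weighted operator $\diverg_\Sigma(w^2 e^{x_3}\,\nablas\,\cdot\,)$ over a noncompact nodal domain; a fallback would be to reduce instead to Theorem \ref{theorem_slab}, which would require showing that a non-flat stable translater either is the bowl (mean convex) or is contained in a slab, and in the latter case satisfies $\lambda(\Sigma) < 3$ -- a plausible consequence of finite total curvature, via a bound on the number of ends. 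Either way, pinning down the behaviour of stable translaters at infinity is the crux of a complete proof.
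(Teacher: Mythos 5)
First, a point of order: the statement you are proving is stated in the paper as a \emph{Conjecture}, not a theorem --- the paper offers no proof of it, only the motivating analogy with Colding--Minicozzi's result for entropy-stable shrinkers (Remark \ref{remark_stable_trans}). So there is no argument in the paper to compare yours against; the question is only whether your proposal closes the conjecture, and it does not.

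Your overall strategy (use that $H$ is a Jacobi field for the stability operator $\laps + \langle e_3^\top,\nablas\cdot\rangle + |A|^2$, produce a positive Jacobi field $w$ via Fischer-Colbrie--Schoen, and run a Liouville argument on $H/w$) is the natural one and is faithful to the philosophy the paper gestures at. But there are two genuine gaps. The first you name yourself: the Liouville step for $\diverg_\Sigma(w^2e^{x_3}\nablas\phi)=0$ over a noncompact nodal domain of $H$ is the entire content of the conjecture, and you give no argument for it; without control on $w$ and on the geometry of $\Sigma$ at infinity, weighted parabolicity is not available and $\phi$ need not be constant. The second is an unsupported input: you assert that a non-flat complete stable translater has \emph{finite total curvature}, citing \cite{ir17}, \cite{ir19}, \cite{ks18}. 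As recorded in Remark \ref{remark_stable_trans}, what those works give is simple connectedness (and related index/topology statements), not finite total curvature; finite total curvature for stable translaters is not known and is itself a strong asymptotic statement --- essentially the ``control at infinity'' whose absence is the reason this remains a conjecture. Your fallback route through Theorem \ref{theorem_slab} has the same character: it requires showing that a non-flat stable translater is either the bowl or lies in a slab with $\lambda(\Sigma)<3$, neither of which is established here. In short, the proposal is a reasonable research program, correctly identifying where the difficulty sits, but both of its possible endgames rest on unproven asymptotic control, so it does not constitute a proof.
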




\subsection*{Organization of the paper}

In Section~\ref{section_curvature_estimate} we derive a curvature estimate for embedded, simply connected translating solitons with  finite entropy, which allows us to use a compactness theorem (based on a standard Arzelà-Ascoli argument) in a crucial step of the proof of Theorem \ref{theorem_slab}. The curvature estimate is a consequence of an estimate by Schoen and Simon \cite{ss83}.

In Section~\ref{section_projection}, which is the longest section of this work, we prove Theorem~\ref{new_refinement}, which is a refinement of results contained in the paper by M\o ller and the author \cite{cm19}.  The proofs are based on a combination of the Omori-Yau maximum principle and barrier arguments. As a byproduct of Theorem \ref{new_refinement}, we obtain a Bernstein type theorem for $1$-periodic properly immersed translaters. 

Section \ref{section_Z} is devoted to the study of the structure of the intersection $Z \coloneqq \Sigma \cap T_p \Sigma$, where $T_p \Sigma$ denotes the geometric tangent space of $\Sigma$ at some point $p \in \Sigma$ where $H(p) = 0$. This is done by observing that $Z$ is the nodal set of a function $f \colon \Sigma \to \R$ solving an elliptic PDE of the kind $\Delta^\Sigma f = h f$ for some function $h \in C^{\infty}(\Sigma)$ and applying a result by \cite{ch76}. Under the assumption of $\Sigma$ being simply connected, we study the topology of $Z$. Namely, we show, by using a maximum principle argument, that each connected component of $Z$ is contractible.

In Section \ref{section_H} we  study the structure of $\{H = 0\}$ and show that on a translater the unit normal vector cannot be constant along $\{H = 0\}$ unless the translater is mean convex. 


In Section~\ref{section_proof} we finally prove Theorem~\ref{theorem_slab}.  The proof   proceeds by contradiction. We assume that $\Sigma$ is not mean convex and we  carefully study the intersection $Z = \Sigma \cap T_p \Sigma$, where $p \in \Sigma$ is some point such that $H(p) = 0$.   We  distinguish two different cases and we see that one case contradicts the entropy bound and the other one contradicts the topological assumption of $\Sigma$ being simply connected. 

In the Appendix \ref{appendix} we recall the definition and some basic well-known properties of the entropy functional.


\subsection*{Acknowledgments:} I would like to thank Toby Colding and Bill Minicozzi for their hospitality at MIT during Spring 2019, where some of the ideas contained in this work were developed. During my stay, I had the opportunity to have many enlightening conversations with several people, but I am mostly in debt  with Kyeongsu Choi, from whom I have learned many things.

I want to express my gratitude also to Alexander Friedrich and Felix Lubbe for reading a preliminary version of this paper and giving me useful  feedbacks. 

I would like to thank my supervisor Niels Martin M\o ller for his constant support. 


\section{Curvature estimate}\label{section_curvature_estimate}\label{section_1}

In this section we derive a curvature estimate for simply connected translating solitons with finite entropy, which is of independent interest. Similar results have already been obtained for $2$-dimensional translating solitons, but under different assumptions. See for instance Theorem 3.2 in \cite{sh15}, Theorem 4.8 in \cite{guang16},  Theorem 2.8 in \cite{sx17} and Theorem A.3 in \cite{hmw19}.

\begin{proposition}\label{curvature_estimate}
Let $\Sigma^2 \subseteq \R^3$ be a complete, embedded, simply connected translater such that $\lambda({\Sigma}) < \infty$.

Then there exists a constant $C > 0$ such that $|A| \le C$.
\end{proposition}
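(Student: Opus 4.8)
The plan is to combine the entropy bound with a standard small-energy curvature estimate of Schoen--Simon type. The key observation is that translaters are minimal surfaces for the conformally flat metric $e^{x_3}\delta_{ij}^{\text{Eucl}}$ on $\R^3$, and that a finite entropy bound for $\Sigma$ translates into a uniform area bound for $\Sigma$ in all extrinsic balls (with a constant depending only on $\lambda(\Sigma)$), via the monotonicity-type estimates recalled in the Appendix. Since $\Sigma$ lies in a fixed $\R^3$ with bounded conformal factor on compact sets, a Euclidean area bound on balls of radius $1$ follows, and this area bound is \emph{scale-invariant} only up to the conformal factor, but over balls of bounded radius it suffices.

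First I would fix $p \in \Sigma$ and work in the ball $B_1(p) \subseteq \R^3$. From $\lambda(\Sigma) < \infty$ and the properties of entropy (Appendix \ref{appendix}), I would extract a uniform bound $\mathcal{H}^2(\Sigma \cap B_r(x)) \le \Lambda r^2$ for all $x \in \Sigma$ and all $r \le 1$, where $\Lambda = \Lambda(\lambda(\Sigma))$; this is where finiteness of the entropy is used. Next I would invoke the Schoen--Simon curvature estimate \cite{ss83}: for a stable minimal hypersurface (here, stable as a minimal surface in the weighted metric, which holds for translaters since they are weighted-minimal and the ambient metric is fixed and smooth) with such an area bound and with trivial topology in the ball (no small loops — guaranteed since $\Sigma$ is simply connected, so every component of $\Sigma \cap B_1(p)$ is simply connected after possibly shrinking the radius), one gets $|A|(p) \le C(\Lambda)$. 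Because translaters satisfy the elliptic translator equation \eqref{translating_equation}, they are automatically stable in the relevant weighted sense, so the stability hypothesis of \cite{ss83} is met; alternatively one phrases everything in terms of the minimal surface equation for the metric $e^{x_3}\delta_{ij}$ and uses that $\Sigma$ is two-sided and simply connected.

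The main obstacle, and the point requiring care, is the topological hypothesis in the Schoen--Simon estimate: their estimate needs the surface to have no nontrivial topology inside the ball (or at least a bound on the genus/number of ends), which is exactly why simple connectedness of $\Sigma$ is assumed rather than, say, finite genus. Simple connectedness of the ambient surface does not immediately give simple connectedness of $\Sigma \cap B_1(p)$, so I would need a covering/retraction argument: pass to a slightly smaller ball and use that $\Sigma$ is embedded and simply connected to rule out handles in the intersection, perhaps also using the one-endedness that embeddedness plus the slab-free global structure provides — though here we have not yet restricted to slabs, so one instead argues purely from the intrinsic topology, noting that an embedded simply connected surface intersected with a convex ball has only disk components for generic radius. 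Once the area bound and the local topological control are in hand, the conclusion $|A| \le C$ with $C = C(\lambda(\Sigma))$ is immediate from \cite{ss83}, and a covering argument over all of $\Sigma$ gives the global bound.
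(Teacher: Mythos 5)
There is a genuine gap in your argument: the claim that translaters ``are automatically stable in the relevant weighted sense'' is false. Being a translater means $\Sigma$ is a \emph{critical point} of the weighted area functional for the Ilmanen metric $e^{x_3}\delta_{ij}$, i.e.\ it is weighted-minimal, but criticality does not imply nonnegativity of the second variation. Stability is a genuine extra hypothesis; indeed Remark~\ref{remark_stable_trans} of this paper treats stable translaters as a proper subclass and records that stability \emph{implies} simple connectedness, not the converse. So the stability hypothesis of the Schoen--Simon estimate you are invoking is not met, and the proof as written does not go through. Your fallback (``two-sided and simply connected'') is also insufficient by itself: without stability or some quantitative control on the Gauss map you cannot expect a small-energy curvature estimate.

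The paper instead uses the \emph{other} estimate from \cite{ss83}, namely Theorem~1 for embedded, simply connected surfaces with quasiconformal Gauss map. A translater satisfies $|A|^2 = H^2 - 2K \le 1 - 2K$ by \eqref{translating_equation}, hence has $(2,1)$-quasiconformal Gauss map in the sense of \eqref{eq_quasiconformal}. Combined with the area bound $\mathrm{Area}(\Sigma\cap\mathcal{B}_R(x))\le C_1 R^2$ from the entropy (Remark~\ref{remark_entropy_area_growth}), Theorem~1 of \cite{ss83} gives a uniform H\"older estimate on $\nu$, hence a uniform-scale graphical representation with $C^1$ bounds; one then runs Schauder estimates (or, as the paper remarks, Ecker--Huisken) on the translater equation \eqref{PDE_u} to bound $|\mathrm{Hess}\,u| = |A|$. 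This route needs no stability at all, and it also makes your topological worry moot: the quasiconformal Gauss map theorem is stated for the globally simply connected embedded surface and produces a local estimate around each point, so no covering or retraction argument to control the topology of $\Sigma\cap B_1(p)$ is required.
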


\begin{proof}
Remark \ref{remark_entropy_area_growth} in the Appendix \ref{appendix}, implies that  there exists a constant $C_1 = C_1(\lambda(\Sigma)) > 0$ such that
$$
\text{Area}(\Sigma \cap \mathcal{B}_R(x)) \le C_1 R^2
$$  
 for any radius $R > 0$, for any point $x \in \R^3$, where $\mathcal{B}_R(x)$ is the open ball in $\R^3$ centered at $x \in \R^3$ of radius $R > 0$.

 Recall that  $\Sigma^2 \subseteq \R^3$ is said to have $(\gamma_1, \gamma_2)$-quasiconformal Gauss map, with $\gamma_1, \gamma_2 \ge 0$, if 
\begin{equation}\label{eq_quasiconformal}
|A|^2(p) \le -\gamma_1 K(p) + \gamma_2, \qquad p \in \Sigma,
\end{equation}
where $K$ denotes the Gauss curvature. Since $\Sigma$ is a translater, i.e. satisfies \eqref{translating_equation}, then it has $(2, 1)$-quasiconformal Gauss map, namely
$$
|A|^2(p) = - 2 K(p) + H^2(p) \le -2 K(p) + 1, \qquad p \in \Sigma.
$$

We can apply the estimate for embedded simply connected surfaces with quasiconformal Gauss map of Schoen and Simon   \cite{ss83}.  More precisely, let us fix $R>0$.  Theorem~1 in \cite{ss83} implies that  there exist  constants $C_2 = C_2(R, \lambda(\Sigma)) > 0$ and $\alpha = \alpha(R, \lambda(\Sigma)) \in (0,1)$ such that for any $p \in \Sigma$ we have
\begin{equation}\label{holder_gauss_map}
\| \nu(x) - \nu(\tilde{x})\| \le C_2 \|x - \tilde{x}\|^{\alpha},
\end{equation}
for any $x, \tilde{x} \in \Sigma'$, where $\Sigma'$ is the connected component of $\Sigma \cap \mathcal{B}_R(p)$ containing $p$. 

Equation \eqref{holder_gauss_map} implies that there exists $\varrho = \varrho(\lambda(\Sigma)) > 0$, such that for any $p \in \Sigma$, the connected component of $\Sigma \cap \mathcal{B}_\varrho(p)$ containing $p$ is the graph of a smooth function $u$ over an open domain $\Omega$ of $T_p\Sigma$ such that $\|\nabla u\| <1 $. Note that $T_p\Sigma$ denotes the geometric tangent plane of $\Sigma$ at $p$. It is easy to see that the $2$-dimensional disk $B_{\frac{\varrho}{\sqrt{2}}}(p) \subseteq T_p \Sigma$ is contained in $\Omega$. With a small abuse of  notation, we keep denoting the restriction $u|_{B_{\frac{\varrho}{\sqrt{2}}}(p)}$ by $u$. Note that we have 
$
\sup_{B_{\frac{\varrho}{\sqrt{2}}}(p)} |u| \le \frac{\varrho}{\sqrt{2}}.
$
We summarize the above observations as follows
\begin{equation}\label{c1_bound_u}
\| u\|_{C^{1}\lef B_{\frac{\varrho}{\sqrt{2}}}(p)\rig} \le 1 + \frac{\varrho}{\sqrt{2}}.
\end{equation}

Observe that equation \eqref{translating_equation} implies that $u$ solves the following elliptic equation
\begin{equation}\label{PDE_u}
\di\lef \frac{D u }{\sqrt{1 + \|Du\|^2}} \rig = F,
\end{equation}
where $F(y) \coloneqq  \langle \nu(y, u(y)), e_3 \rangle$ is a smooth function and $y= (y_1, y_2)$ are Cartesian coordinates on the plane $T_p\Sigma$. Observe that $|F| \le 1$ and from \eqref{holder_gauss_map}, we have a uniform estimate of the $\alpha$-H\"older norm of $F$. Namely, given $y, \tilde{y} \in B_{\frac{\varrho}{\sqrt{2}}}(p) \subseteq T_p \Sigma$, we have
\begin{align*}
\frac{|F(y) - F(\tilde{y})|}{\|y - \tilde{y}\|^\alpha}  &\le \frac{\|\nu(y, u(y)) - \nu(\tilde{y}, u(\tilde{y}))\|}{\|y - \tilde{y}\|^\alpha}\\
&\le 2^\alpha \frac{\|\nu(y, u(y)) - \nu(\tilde{y}, u(\tilde{y}))\|}{\|(y, u(y)) - (\tilde{y}, u(\tilde{y})\|^\alpha}\\
&\le 2^\alpha \frac{C_2}{R^\alpha} \eqqcolon C_3.
\end{align*}

We can think of \eqref{PDE_u} as a linear elliptic equation in $u$ where the coefficients depend on $Du$. The  uniform $C^1$ estimate \eqref{c1_bound_u} implies uniform ellipticity and a uniform bound on $C^1$-norms of the coefficients. This, together with the uniform estimate of the $\alpha$-H\"older norm of $F$, allow us to apply standard Schauder estimates (see for instance Corollary 6.3 in \cite{gt01}). Therefore for every $\delta \in (0, \frac{\varrho}{\sqrt{2}})$ there exists a constant $C_4>0$ such that
\begin{equation}
\|u\|_{C^2\lef B_{\delta}(p)\rig} \le C_4.
\end{equation}
The constant $C_4$ depends only on $\delta$ and on the bounds on the $C^1$-norm of $u$ and the $\alpha$-H\"older norm of $F$. Observe that none of those bounds depend on the point $p \in \Sigma$. In fact, they ultimately depend on the value of the entropy $\lambda(\Sigma)$. This concludes the proof, since $|A|^2(p) = |\hess u |^2(p)$.
\end{proof}

\begin{remark}
Note that in the proof of Proposition \ref{curvature_estimate}, after using \cite{ss83} to show that $\Sigma$ can be  locally described as a graph with a uniform control on its $C^1$ norm, we could have obtained a uniform estimate for $|A|$ by applying the local curvature estimate by Ecker and Huisken, i.e. Theorem 3.1 in \cite{eh91}. 
\end{remark}


\section{Asymptotic behavior  of properly immersed translaters}\label{section_projection}

In this section, $\Sigma^2 \subseteq \R^3$ is a properly immersed translater. We do not assume any bound on the entropy and we do not put any restriction on  the topology of $\Sigma$. 

Let us fix some notation. 
\begin{itemize}
\item $\pi \colon \R^3 \to \R^2$  denotes the projection $\pi(x_1, x_2, x_3) = (x_1, x_2)$. 
\item $\conv(\cdot)$ denotes the (closed) convex hul.
\item $B_\varrho (q)$ denotes the open ball in $\R^2$ centered at a point $q \in \R^2$, with radius $\varrho > 0$.
\item We say that a plane $P \subseteq \R^3$ is \emph{vertical} if $P \parallel e_3$. 
\item We say that a halfspace $\mathcal{H} \subseteq \R^3$ is \emph{vertical} if the plane $\partial \mathcal{H}$ is vertical.
\end{itemize}

\begin{remark}\label{remark_asymptotic_planes}
 From \cite{cm19} (see also the more general case of ancient flows \cite{cm19b}) it is known that $\conv(\pi(\Sigma))$ is either a line, a strip, a half-plane or the whole $\R^2$. Therefore $\pi^{-1} (\partial \conv (\pi(\Sigma)))$ can be, respectively, only one of the following 
\begin{enumerate}[(i)]
\item a vertical plane,
\item two parallel vertical planes,
\item the empty set.
\end{enumerate}
We will see in this section that, if we are in Case (i) or (ii), $\Sigma$ is (in some weak sense) asymptotic to $\pi^{-1} (\partial \conv (\pi(\Sigma)))$ as $x_3 \rightarrow \infty $. See the  Theorem~\ref{new_refinement} and Corollary~\ref{corollary_of_proposition_refinement} below for a precise statement.
\end{remark}

\begin{theorem}\label{new_refinement}
Let $\Sigma^2 \subseteq \R^3$ be a properly immersed translater such that $\partial\conv(\pi(\Sigma)) \ne \emptyset$. 

Then for every $q \in \partial\conv(\pi(\Sigma))$ and for every $\varrho > 0$ we have that
\begin{equation}\label{eq_prop_refinement}
\sup_{\Sigma \, \cap \, \pi^{-1}(B_\varrho (q))} x_3 = +\infty. 
\end{equation}
\end{theorem}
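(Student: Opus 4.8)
The plan is to argue by contradiction: suppose that for some boundary point $q \in \partial\conv(\pi(\Sigma))$ and some radius $\varrho > 0$ we have $\sup_{\Sigma \cap \pi^{-1}(B_\varrho(q))} x_3 = h < \infty$. Geometrically this says that the part of $\Sigma$ sitting over the disk $B_\varrho(q)$ is trapped below the horizontal plane $\{x_3 = h\}$, even though $q$ lies on the boundary of the (closed, convex) shadow $\conv(\pi(\Sigma))$. The idea is that a translater cannot be ``capped off'' from above over an open set of its projection without violating the translater equation, because the bowl-type translater — or, more precisely, a large piece of a downward-opening grim reaper or bowl-shaped barrier — forces $\Sigma$ to climb to $+\infty$ near the edge of its shadow. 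I would locate the contradiction using an explicit supersolution barrier combined with a maximum principle (the Omori–Yau maximum principle, since $\Sigma$ is only properly immersed, not necessarily with bounded geometry — this matches the tools the introduction says are used in this section).

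Here is the sequence of steps. First, reduce to a clean local picture: since $q \in \partial\conv(\pi(\Sigma))$, there is a line $\ell \subseteq \R^2$ through $q$ supporting $\conv(\pi(\Sigma))$, so after a rotation and translation we may assume $q = 0$, that $\conv(\pi(\Sigma)) \subseteq \{x_2 \geq 0\}$, and that $\Sigma \cap \pi^{-1}(B_\varrho(0)) \subseteq \{x_3 \leq h\}$. Second, construct a barrier: recall that the grim reaper cylinder $\{x_3 = -\log \cos x_2 + c\}$ (over the strip $|x_2| < \pi/2$) is a translater, and more generally there are grim-reaper-type graphs and the bowl translater which are convex, mean-convex, and have the property that their height blows up as one approaches the boundary of their domain of definition. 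I would place a suitably scaled and positioned downward-oriented such barrier $\Gamma$ (e.g. a tilted grim reaper cylinder whose graph over a slab containing a neighborhood of $0$ in $\R^2$ lies above $\{x_3 = h\}$ on $\overline{B_\varrho(0)}$ but whose ``wall'' sits outside $\conv(\pi(\Sigma))$, so that $\Sigma$ cannot touch it from the side) so that initially $\Sigma \cap \pi^{-1}(B_\varrho(0))$ lies strictly below $\Gamma$. Third, slide $\Gamma$ downward (decrease its additive constant $c$) until it first touches $\Sigma$; by properness of $\Sigma$ and the height bound $h$, a first contact point $p$ exists and lies in the interior of the relevant region (the geometry is arranged so contact cannot occur over $\partial B_\varrho(0)$ nor escape to infinity). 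Fourth, at $p$ both $\Sigma$ and $\Gamma$ are translaters tangent to each other with $\Sigma$ locally on one side, so the strong maximum principle for the (quasilinear, uniformly elliptic once we are at a graph point) translater equation forces $\Sigma$ and $\Gamma$ to coincide near $p$, and then a continuation/connectedness argument forces $\Sigma$ to contain all of $\Gamma$ — but $\Gamma$'s shadow sticks out of $\{x_2 \geq 0\} \supseteq \conv(\pi(\Sigma))$, a contradiction. Where the Omori–Yau principle enters is in making the ``first contact'' argument rigorous when $p$ could a priori run off to infinity on $\Sigma$: one applies Omori–Yau to the difference of the two defining functions (or to $x_3$ restricted to $\Sigma$ against the barrier) to extract a sequence along which the gradient vanishes and the Hessian is almost negative semidefinite, yielding the same sub/supersolution comparison in the limit.

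The main obstacle, I expect, is choosing the barrier family correctly and verifying that the ``sweep-down'' genuinely produces an interior contact rather than the barrier passing harmlessly through a hole or contact escaping to infinity. One has to exploit both that $\conv(\pi(\Sigma))$ is convex with $q$ on its boundary (so a one-sided halfplane geometry is available — this is exactly where Remark~\ref{remark_asymptotic_planes}'s classification of $\conv(\pi(\Sigma))$ is used) and that $\Sigma$ is properly immersed (so that $\Sigma \cap \pi^{-1}(\overline{B_\varrho(0)}) \cap \{x_3 \leq h\}$ is ``compact enough'' in the relevant directions, or at least amenable to Omori–Yau). A secondary technical point is that $\Sigma$ is only immersed, so ``$\Sigma$ lies below $\Gamma$'' and ``first contact'' must be phrased on the universal cover or locally sheet-by-sheet; but since the barrier $\Gamma$ is embedded and we only need a local tangency-from-one-side statement to run the strong maximum principle, this is manageable. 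Once interior tangency is secured, the rest is the standard Hopf-type strong maximum principle plus a connectedness argument, which I would not belabor.
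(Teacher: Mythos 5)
Your reduction to $\conv(\pi(\Sigma)) \subseteq \{x_2 \geq 0\}$ with $q = 0$, and the use of translater barriers together with Omori--Yau, are all in the spirit of the paper, but the single ``slide a barrier down until first contact'' step has a genuine gap. The contradiction hypothesis only bounds $x_3$ on $\Sigma \cap \pi^{-1}(B_\varrho(q))$, whereas every complete graphical translater you could use as a barrier (grim reaper, tilted grim reaper, $\Delta$-wing, bowl) is a graph over a strip or plane of width at least $\pi$. For small $\varrho$ --- which is the hard case, since the statement is monotone in $\varrho$ --- any such barrier necessarily has part of its shadow inside $\conv(\pi(\Sigma)) \setminus B_\varrho(q)$, where $\Sigma$ is not height-controlled, so the purported first contact may occur over that uncontrolled region and the tangency/continuation argument does not apply. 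Appealing to Omori--Yau applied to $x_3 - u$ (with $u$ the barrier graph) does not repair this: Omori--Yau requires the test function to be bounded above, which is exactly what is unknown over the uncontrolled part of the barrier's domain.

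The paper closes this gap with two propagation steps before the first-contact step you envision. Step (i) applies Omori--Yau (with an adiabatic cut-off in $x_3$ to push the extremum away from $\{x_3 = 0\}$) to the distance to a vertical line $\mathcal{L}$ --- not to $x_3$ or to a barrier difference --- restricted to the part of $\Sigma$ over a bounded region $\mathcal{A}_s$ squeezed between a disk $D_s$ tangent to the boundary line and the ball $B_{\varrho^*}(q^*)$; varying $s$ shows $x_3$ is bounded above on $\Sigma$ over every compact subset of the thin half-strip $\{0 \le x_2 < \varrho^*\}$. Step (ii) rotates a grim reaper cylinder about the vertical axis through $(0,\delta)$ and uses that the relevant shadow-intersection is a triangle already controlled by Step (i), upgrading the estimate to a uniform height bound on $\Sigma_\delta = \Sigma \cap \{0 \le x_2 \le \delta\}$. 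Only then does Step (iii) run your sliding argument, with a $\Delta$-wing translater over $\{-\gamma < x_2 < \delta\}$, $\gamma + \delta > \pi$: the part of the strip with $x_2 < 0$ lies outside $\conv(\pi(\Sigma))$, and the part with $0 \le x_2 \le \delta$ is exactly $\Sigma_\delta$, where the uniform bound of Step (ii) guarantees an interior first contact so the separating tangency principle applies. Without Steps (i) and (ii) the sweep you propose cannot be closed. (A small additional point: no passage to the universal cover is needed; the paper works directly with properly immersed surfaces via the separating tangency principle.)
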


\begin{proof}
Let us assume for contradiction that there exists $q^*\in \partial\conv(\pi(\Sigma))$ and a radius $\varrho^*>0$ such that 

\begin{equation*}
\sup_{\Sigma \, \cap \, \pi^{-1}({B_{2\varrho^*}(q^*)})} x_3 < +\infty.
\end{equation*} 

Up to a translation in the $e_3$ direction, we can assume that 
\begin{equation}\label{assumption_prop_new_refinement}
\sup_{\Sigma \, \cap \, \pi^{-1}(\overline{B_{\varrho^*}(q^*)})} x_3 < 0,
\end{equation} 
where $\overline{B_{\varrho^*}(q^*)}$ is the closure of $B_{\varrho^*}(q^*)$. 

W.l.o.g.\!\! we can assume that $\pi(\Sigma)$ is contained in the upper half-plane of $\R^2$, i.e. $\pi(\Sigma) \subseteq \{(x_1, x_2) \in \R^2 \colon x_2 \ge 0\}$ and let us assume that the $x_1$-axis $\{(x_1, x_2) \in \R^2 \colon  x_2 = 0\} $ is a connected component of $\partial\conv(\pi(\Sigma))$. Let us also assume $q^* = (0, 0)$.

The rest of the proof will be divided into three steps. 
\begin{enumerate}[(i)]
\item By using the Omori-Yau maximum principle for properly immersed translaters, we are going to prove that $x_3$ is bounded from above  on $\Sigma\, \cap \, \pi^{-1}(\mathcal{K})$, for every compact set $\mathcal{K} \subseteq \{(x_1, x_2) \in \R^2 \colon 0 \le x_2 < \varrho^*\}$. 
\item By using a family of grim reaper cylinders as barriers, we will prove that $x_3$ is uniformly bounded from above on $$\Sigma_\delta \coloneqq \Sigma \cap \{x \in \R^3 \colon 0 \le x_2 \le \delta \},$$ for every $\delta < \varrho^*$.
\item By using a family of $\Delta$-wing translaters as barriers, we will finally get a contradiction by proving that $\Sigma_\delta = \emptyset$, for every $\delta < \varrho^*$.
\end{enumerate}

\textbf{Step (i)}: Observe that for every $s \in \R$ such that $|s| > \varrho^*$, there exists a unique closed disk $D_s \subseteq \{(x_1, x_2) \colon x_2 \ge 0\}$ such that $D_s$ is tangent to the $x_1$-axis at the point $(s, 0)$, i.e. 
$$
D_s \cap \{(x_1, x_2) \in \R^2 \colon x_2 = 0\} = \{(s, 0)\}
$$
 and such that  $(0, \varrho^*) \in \partial D_s$. See Figure \ref{fig_1}. 

\begin{center}
\begin{figure}
\begin{tikzpicture}[use Hobby shortcut]
\clip (-6,3) rectangle + (12,-4.5);
\draw[->, name path =B] (- 3, 0) -- (0, 0) ;    
\draw[  name path = A] (-3,5) circle (5);
\tikzfillbetween[of=A and B]{blue, opacity=0.1};
\draw[fill = white] (-3,5) circle (5);
\draw[ fill=white, opacity = 1] (0,0) circle (1);
\draw[] (-3,5) circle (5);
\draw[->] (0, -1.5) -- (0,3);
\draw (0, 2.8) node [right] {$x_2$};
\draw[->] (- 6, 0) -- (3, 0) ;    
\draw (3, 0) node [below] {$x_1$};
\draw (-2.5, 1.5) node {$D_s$};
\fill (0,0) circle[radius=1pt] node[below right] {$q^* \! =\! (0, 0)$};
\fill (0,1) circle[radius=1pt] ;
\draw (0.2, 1.1) node [right] {$(0, \varrho^*)$};
\fill (-3,0 ) circle[radius=1pt] node[below] {$(s, 0)$};
\draw (-1.3, 0.1) node {$\mathcal{A}_s$};
\end{tikzpicture}
\caption{} \label{fig_1}
\end{figure}
\end{center}

Observe that $\{(x_1, x_2) \in \R^2 \colon x_2 \ge 0\} \setminus \lef D_s \cup B_{\varrho^*}(0)\rig$ has two connected components a bounded one and an unbounded one. Let us call $\mathcal{A}_s$ the bounded one. Observe that the family $\lef\mathcal{A}_s\rig_{|s|> \varrho^*}$ together with $B_{\varrho^*}(0)$, cover the strip $\{(x_1, x_2) \colon 0 \le x_2 < \varrho^*\}$. Namely, 
\begin{equation}\label{cover_strip_A}
\lef B_{\varrho^*}(0) \, \cup\! \bigcup_{s \in \R \colon |s| > \varrho^*} \mathcal{A}_s \rig \supseteq \{(x_1, x_2) \colon 0 \le x_2 < \varrho^*\}.
\end{equation}
We are now going to prove that $x_3$ is bounded on $\Sigma \cap \pi^{-1}(\mathcal{A}_s)$ for every $s \in \R$ such that $|s| > \varrho^*$. This will finish the proof of Step (i), because of \eqref{cover_strip_A}. We will do this by using the Omori-Yau maximum principle for properly immersed translaters and we refer to \cite{cm19} and to \cite{xi15} for details. 

Let us assume for contradiction that there exists $s^* \in \R$ such that $|s^*| > \varrho^*$ and such that 
$$
\sup_{\Sigma \, \cap \, \pi^{-1}(\mathcal{A}_{s^*})} x_3 = + \infty.
$$
 Let  $c \in \{(x_1, x_2) \colon x_2 \ge 0\}$ be the center of the disk $D_{s_0}$ and let $R > 0$ be its radius. Let $\mathcal{L}$ be the vertical line passing through the center $c$, i.e. $\mathcal{L} \coloneqq \pi^{-1}(\{c\}) = \{c\} \times \R$. Let us define a function $f \colon \R^3 \to \R$ as follows:
\begin{equation} 
f(x) \coloneqq
\begin{cases}
\dist \lef x, \mathcal{L}\rig  \qquad &\text{ if } \pi(x) \in \mathcal{A}_{s^*}\\
R &\text{ if } \pi(x) \notin \mathcal{A}_{s^*}.
\end{cases}
\end{equation}

Since $\mathcal{A}_{s^*}$ is bounded, $f$ is bounded. 
Observe that the the set of points where  $f|_{\Sigma}$  may be discontinuous is $\pi^{-1} \lef \partial B_{\varrho^*}(q^*) \cap  \partial \mathcal{A}_{s^*} \rig \cap \Sigma$, which is contained in $\pi^{-1}\lef \partial B_{\varrho^*}(q^*) \rig \cap \Sigma$. 
Let us consider the translater with boundary 
$$
\widetilde{\Sigma} \coloneqq \Sigma \cap \{x \in \R^3 \colon x_3 \ge 0\}.
$$

From \eqref{assumption_prop_new_refinement}, we have that 
$$
\pi^{-1}\lef \partial B_{\varrho^*}(q^*) \rig \cap \widetilde{\Sigma}  = \pi^{-1}\lef \partial B_{\varrho^*}(q^*) \rig \cap \Sigma \cap \{x \in \R^3 \colon x_3 \ge 0 \} = \emptyset,
$$
therefore $f|_{\widetilde{\Sigma}}$ is continuous. Moreover, $f|_{{\Sigma} \cap \pi^{-1}(\mathcal{A}_{s^*})}$ is smooth.
From standard computations (see \cite{cm19}) and using equation \eqref{translating_equation}, one can easily see that on $\Sigma \cap \pi^{-1}(\mathcal{A}_{s^*})$
\begin{equation}\label{omori_yau_eq_f} 
\Delta^{\Sigma} f = \frac{1 - \| \nabla^\Sigma f\|^2}{f} - \langle \nabla^{\R^3} f, \nu\rangle \langle \nu, e_3 \rangle.
\end{equation}

As in the proof of Theorem 1.2 in \cite{cm19}, we will use the Omori-Yau maximum principle combined with an ``adiabatic trick''. More precisely, we would like to apply the Omori-Yau maximum principle to the function $f|_{\widetilde{\Sigma} }$ defined on the translater with boundary $\widetilde{\Sigma }$.  But we need to employ the adiabatic trick because the maximum might be reached on the boundary $\partial \widetilde{\Sigma} = \Sigma \cap \{x_3 = 0\}$.

Let $\psi \colon \R \to \R$ be a smooth cut-off function such that 
\begin{itemize}
\item $0 \le \psi \le 1$,
\item $\psi|_{(-\infty, 0]} \equiv 0$,
\item $\psi|_{[1, \infty)} \equiv 1$.
\end{itemize}
For every $l >0$, let  $\chi_l \colon \R^3 \to \R$ be a function defined as follows:
$$
\chi_l(x) \coloneqq \psi\lef \frac{x_3}{l} \rig.
$$
Observe that there exists a constant $C$, which does not depend on $l$, such that 
\begin{equation}\label{estimates_chi} 
\sup_{x \in \R^3} \| \nabla^{\R^3} \chi (x) \| \le \frac{C}{l}, \qquad  \sup_{x \in \R^3} \| \hess^{\R^3} \chi (x) \| \le \frac{C}{l^2}.
\end{equation}
Now let us define the function $f_l \colon \R^3 \to \R$ as follows:
$$
f_l(x) \coloneqq f(x) + M \chi_l(x),
$$
where $M \coloneqq \sup f$.

Observe that $f_l$ is bounded. In fact,
\begin{equation}
R \le f_l \le 2 M.
\end{equation}
Moreover, observe that
\begin{equation}\label{eq_sup_f_l}   
\sup_{\widetilde{\Sigma}\cap \pi^{-1}(\mathcal{A}_{s^*})} f_l > R +  M = \sup_{\widetilde{\Sigma} \, \setminus \, \pi^{-1}(\mathcal{A}_{s^*})} f_l
\end{equation}
and also note that $f_l$ is smooth on $\widetilde{\Sigma} \, \cap \, \pi^{-1}(\mathcal{A}_{s^*})$ away from $\partial \widetilde{\Sigma}$. 
The Omori-Yau maximum principle yields the existence of  a sequence $(p_k) \subseteq\widetilde{\Sigma}  \cap  \pi^{-1}(\mathcal{A}_{s^*})$ satisfying the following properties:
\begin{enumerate}[(i)]
\item $\lim_{k \rightarrow \infty} f_l(p_k) = \sup_{\Sigma}f_l$,
\item $\lim_{k \rightarrow \infty} \nabla^\Sigma f_l(p_k) = 0$,
\item $\lim_{k \rightarrow \infty} \Delta^{\Sigma} f_l(p_k) \le 0$.
\end{enumerate}
Such a sequence $(p_k)$ is said to be an \emph{Omori-Yau sequence} for $f_l$.

We now distinguish two cases and we will see that they both lead to a contradiction. Let us assume first that there exists $l_0>0$ for which $f_{l_0}$ admits an Omori-Yau sequence $(p_k) \subseteq \widetilde{\Sigma} \cap \pi^{-1}(\mathcal{A}_{s^*}) $  with $x_3(p_k)$ unbounded in the $+\infty$ direction. 
Therefore for $k$ large enough, we have that $x_3(p_k) \ge l_0$ and thus $f_{l_0}(p_k) = f(p_k) + M$, 
 $\nabla^{\Sigma} f_{l_0}(p_k) = \nabla^{\Sigma} f(p_k)$ and  $\Delta^{\Sigma} f_{l_0}(p_k) = \Delta^{\Sigma} f(p_k)$. Therefore we have that
\begin{align}\label{equation_OY_nabla_f} 
\lim_{k \rightarrow \infty} \nabla^\Sigma f(p_k) = 0  
\end{align}
and 
\begin{equation}\label{equation_OY_lap_f}
\lim_{k \rightarrow \infty} \Delta^{\Sigma} f(p_k) \le 0.
\end{equation}

Note that on $\pi^{-1}(\mathcal{A}_{s^*})$, we have that $\nabla^{\R^3} f$ is a unit vector field, since it is the gradient of a distance function.  Observe that from \eqref{equation_OY_nabla_f}, and from the decomposition 
$$
\|\nabla^{\Sigma} f\|^2 = \Big\|\nabla^{\R^3} f\Big\|^2 - \Big\|\lef\nabla^{\R^3} f\rig^{\perp}\Big\|^2,
$$
we have that $ \lim_{k \rightarrow \infty} |\langle \nabla^{\R^3} f(p_k), \nu(p_k)\rangle  | = 1$.

Since $\nabla^{\R^3} f \perp e_3$, this implies 
\begin{equation}\label{equation_nu_OY}%
\lim_{k \rightarrow \infty} \langle \nu(p_k), e_3 \rangle = 0.
\end{equation}
From  \eqref{omori_yau_eq_f}, \eqref{equation_nu_OY} and \eqref{equation_OY_nabla_f}, we obtain
\begin{equation}
\lim_{k \rightarrow \infty} \Delta^{\Sigma} f(p_k) = \frac{1}{\lim_{k \rightarrow \infty} f(p_k)} =  \frac{1}{\sup_{\Sigma} f_{l_0} -M} > 0
\end{equation}
and this is in contradiction with  \eqref{equation_OY_lap_f}.

Let us now assume that for every $l > 0$, every Omori-Yau sequence has bounded $x_3$-coordinate. This implies, since $\Sigma$ is proper, that $f_l$ attains its maximum at some point $q_l \in \widetilde{\Sigma} \cap \pi^{-1}(\mathcal{A}_{s^*})$. Therefore we have
\begin{enumerate}[(i)]
\item $f_l(q_l) = \sup_{\Sigma} f_l$,
\item $\nabla^{\Sigma}f_l (q_l) = 0$,
\item $ \Delta^{\Sigma} f_l(q_l) \le 0$.
\end{enumerate} 

From the estimates \eqref{estimates_chi}, we can estimate the gradient of $f$ at $q_l$,
\begin{align*}
\| \nabla^{\Sigma} f (q_l) \| &= \| \nabla^{\Sigma}f_l(q_l) - M \nabla^{\Sigma} \chi_l(q_l) \| \\
&=  \|M \nabla^{\Sigma} \chi_l(q_l) \| \le \|M \nabla^{\R^3} \chi_l(q_l) \| \le \frac{C}{l}.
\end{align*}
Taking the limit for $l \rightarrow \infty$, we have
\begin{equation}\label{limit_gradient_pseudo_OY} 
\lim_{l \rightarrow \infty} \|\nabla^{\Sigma} f(q_l)\| = 0.
\end{equation}
Note that from \eqref{estimates_chi} we can estimate the Laplacian $\Delta^{\Sigma} \chi$ as follows:
\begin{align*}
|\Delta^{\Sigma} \chi_l| &= \left|\Delta^{\R^3} \chi_l - \hess^{\R^3}\!\chi_l(\nu, \nu) + H \langle \nu, \nabla^{\R^3} \chi_l \rangle \right| \\
&\le \frac{C}{l^2} + \frac{C}{l}.
\end{align*}
Therefore, we obtain
\begin{equation}\label{limsup_laplacian_pseudo_OY} 
\limsup_{l \rightarrow \infty} \Delta^{\Sigma} f(q_l) \le 0.
\end{equation}
On the other hand, if we evaluate \eqref{omori_yau_eq_f} at points $q_l$, by using \eqref{limit_gradient_pseudo_OY}, we have  
$$
\lim_{l \rightarrow \infty} \Delta^{\Sigma} f(q_l) > 0
$$
and this is in contradiction with \eqref{limsup_laplacian_pseudo_OY}.
This completes the proof of Step~(i).

\textbf{Step (ii)}: Let us now prove that $x_3$ is uniformly bounded from above on 
$$
\Sigma_\delta \coloneqq \Sigma \cap \{x \in \R^3 \colon 0 \le x_2 \le \delta \}
$$ 
for every $0 < \delta < \varrho^*$.  

Let us decompose $\Sigma_\delta$ as 
$$
\Sigma_\delta = \Sigma_+ \cup \Sigma_-,
$$
where
 $\Sigma_+ \coloneqq \Sigma_\delta \cap \{x \in \R^3 \colon x_1 \ge 0 \}$,  similarly $\Sigma_- \coloneqq \Sigma_\delta \cap \{x \in \R^3 \colon x_1 \le 0 \}$. 
We are going to show that $x_3$ is bounded from above separately on $\Sigma_+$ and $\Sigma_-$.

We prove the claim for $\Sigma_+$ only, since the considerations for $\Sigma_-$ are analogous.
 Let us consider the grim reaper cylinder 
$$
\Gamma \coloneqq \left\{ (x_1, x_2, x_3) \colon x_3 = -\log\left( \cos \left( x_1 + \frac{\pi}{2}\right)\right), \, -\pi < x_1 < 0 \right\}.
$$

Observe that $\Gamma \cap \Sigma_+ = \emptyset$.

 Let $F_\theta^\delta \colon \R^3 \to \R^3$ be the rotation of angle $\theta \in [0, \frac{\pi}{2})$ around the vertical line $\{(0, \delta)\} \times \R$.
Let us define the following 1-parameter family 
$$
\Gamma_\theta^\delta \coloneqq F_\theta^\delta(\Gamma).
$$
Note that 
\begin{equation}\label{boundary_sigma_p}
\partial \Sigma_+ = \{ x \in \Sigma \colon  x_1 = 0 \text{ and } 0 \le x_2 \le \delta \} 
 \cup   \{x \in \Sigma   \colon  x_2 = \delta \text{ and }  x_1 \ge 0\}.
\end{equation}

\begin{center}
\begin{figure}
\begin{tikzpicture}
\draw[->] (-3,0) -- (5.1,0);
\draw (5.1, 0) node [below] {$x_1$};
\draw[->] (0, -2) -- (0, 3.6);
\draw (0, 3.6) node [below right] {$x_2$};
\draw[dashed] (-3, 1) -- (5,1);
\draw (-3, 1) node [left] {$x_2 = \delta$};
\draw[dashed] (-3, 1.5) -- (5,1.5);
\draw (-3, 1.5) node [left] {$x_2 = \varrho^*$};
\draw [dashed, red] (-1.5, -1.5) -- (-1.5, 3.5);
\draw [dashed, red] (0, -1.5) -- (0, 3.5);
\fill [red, opacity = 0.2](-1.5, -1.5) rectangle (0,3.5 );
\coordinate (A) at (0,1);
\draw [dashed, red, rotate around = {30:(A)}] (-1.5, -1.5) -- (-1.5, 3.5);
\draw [dashed, red, rotate around = {30:(A)}] (0, -1.5) -- (0, 3.5);
\fill [red, opacity = 0.2, rotate around = {30:(A)}](-1.5, -1.5) rectangle (0,3.5 );
\draw[red, ->, thick] (0,-1) arc (-90: -60: 2) node[below left]{$\theta$};
\draw (-1, -1.5) node [below] {$\pi(\Gamma)$};
\draw (0.2, -1.4) node [below right] {$\pi(\Gamma_\theta^{\delta})$};
%
\fill[blue, opacity = 0.2] (0,1) rectangle (5,0);
\draw (3, 0.5) node[] {$\pi(\Sigma_+)$};
\draw[very thick] (0, 0) -- (0, 1);
\draw[very thick] (0,1) -- (5,1);
%
\fill (0,1) circle[radius=1pt] ;
\end{tikzpicture}
\caption{} \label{fig_2}
\end{figure}
\end{center}

Because of assumption \eqref{assumption_prop_new_refinement}, we have that for every $\theta \in [0, \frac{\pi}{2})$, \begin{equation}\label{first_piece_boundary}
\Gamma_\theta^\delta \cap  \Sigma  \cap  \{ (0, x_2, x_3) \in \R^3 \colon  0 \le x_2 \le \delta \}  = \emptyset.
\end{equation}
In fact, the grim reaper cylinder $\Gamma$ is the graph of a convex and nonnegative function, therefore the $x_3$-coordinate function is nonnegative on each $\Gamma^{\delta}_\theta$. Moreover from the construction of the family $\Gamma_\theta^\delta$, we have that for every $\theta \in [0, \frac{\pi}{2})$,
\begin{equation}\label{second_piece_boundary}
\Gamma_\theta^\delta \cap  \Sigma  \cap \{(x_1, \delta, x_3) \colon x_1 \ge 0\} = \emptyset.
\end{equation}
Therefore, combing \eqref{first_piece_boundary} and \eqref{second_piece_boundary} with \eqref{boundary_sigma_p},  we conclude that
\begin{equation}\label{no_inters_boundary}
\Gamma_\theta^\delta \cap \partial \Sigma_+ = \emptyset 
\end{equation}
for every $\theta \in [0, \frac{\pi}{2})$.

We want to prove that $\Gamma_\theta^\delta \cap \Sigma_+ = \emptyset$ for every $\theta \in [0, \frac{\pi}{2})$.   Recall that $\Gamma_0^\delta \cap \Sigma_+ = \Gamma \cap \Sigma_+ = \emptyset$.
Consider the function
$$
\theta \mapsto \dist(\Gamma^{\delta}_\theta, \Sigma_+) =  \dist(F^{\delta}_\theta(\Gamma), \Sigma_+).
$$
It is clearly a continuous and nonnegative on $[0, \frac{\pi}{2})$, since it is the composition of two continuous functions.
 We want to prove that it is actually strictly positive on $[0, \frac{\pi}{2})$. Assume for contradiction that this is not the case and let 
$$
\theta^* \coloneqq \min  \left\{\theta \in \left[0, \frac{\pi}{2}\right) \colon \dist(\Gamma^{\delta}_\theta, \Sigma_+) = 0\right\}.
$$
Observe  that $\pi(\Gamma^\delta_\theta) \cap \pi(\Sigma_+)$ is a triangle for each $\theta \in [0, \frac{\pi}{2})$ (see Figure \ref{fig_2}). From  Step (i), we have that the $x_3$-coordinate is bounded from above on $\pi^{-1}\lef \pi(\Gamma^\delta_\theta) \cap \pi(\Sigma_+) \rig \cap \Sigma_+$ and the $x_3$-coordinate is bounded from belove (is nonnegative) on $\Gamma^\delta_\theta$. Thus, since $\Sigma_+$ and $\Gamma_\theta^\delta$ are properly immersed,  the distance between $\Gamma^\delta_\theta$ and $\Sigma_+$ is always attained. In particular we have
$$
\dist(F^{\delta}_\theta(\Gamma), \Sigma_+) = 0 \Leftrightarrow F^{\delta}_\theta(\Gamma) \cap \Sigma_+ = \emptyset,
$$
thus there exists $p \in \Gamma_{\theta^*}^\delta \cap \Sigma_+$. 
From \eqref{no_inters_boundary}, we have that $p \in \lef \Sigma_+ \setminus \partial \Sigma_+ \rig$. But this is in contradiction with the separating tangency principle  (see Lemma 2.4 in \cite{mo14}).

Similarly, one can show that $\Gamma_{\theta}^\delta \cap \Sigma_- = \emptyset$ for $\theta \in (-\frac{\pi}{2}, 0]$. This implies that 
\begin{equation}\label{sigma_bounded_by_gr}
\Sigma_\delta \cap \Gamma_{\frac{\pi}{2}} = \emptyset.
\end{equation}
Note that 
$$
\Gamma_{\frac{\pi}{2}} = \Gamma_{-\frac{\pi}{2}} = \left\{ \lef x_1, x_2,-\log\left(\cos\left( x_2 - \delta + \frac{\pi}{2}\right)\right)\rig,   \delta -\pi < x_2 < \delta \right\}.
$$

In other words, the grim reaper cylinder $\Gamma_{\frac{\pi}{2}}$ lies ``above'' $\Sigma_\delta$. 
Observe that \eqref{sigma_bounded_by_gr} holds for every $0 < \delta < \varrho^*$ (note that the domain of $\Gamma_{\frac{\pi}{2}}$ depends on $\delta$). This finishes the proof of Step (ii).

\textbf{Step (iii)}:  We will now finally show that $\Sigma_\delta = \emptyset$ for every $\delta < \varrho^*$. 
Thanks to Step (ii), we can assume w.l.o.g. 
\begin{equation}\label{eq_hypotesis_step_3}
\sup_{\Sigma_\delta} x_3 < 0.
\end{equation}
Let $S \subseteq \R^3$ be a $\Delta$-wing translater (see \cite{blt18} and \cite{himw19a}) such that it is the graph of a convex function $u \colon \Omega \subset \R^2 \to \R$, where $\Omega$ is the strip
$$
\Omega \coloneqq \{(x_1, x_2) \in \R^2 \colon - \gamma < x_2 < \delta\}
$$
for some $\gamma > 0$ such that $\gamma + \delta > \pi$. Let us now define a one parameter family of translaters with boundary $\tilde{S}_t$ as follows:
$$
\widetilde{S}_t \coloneqq \lef S + t e_3 \rig \cap \{x \in \R^3\colon x_3 \le 0\}.
$$ 
Note that $\widetilde{S}_t$ is compact and $\partial (\tilde{S}_t ) = (S + t e_3) \cap \{x_3 = 0\}$. Observe that
\begin{equation}
\bigcup_{t \in \R} \widetilde{S}_t = \Omega \times (-\infty, 0].
\end{equation}
From the way we chose $\Omega$, we have that $\Sigma \cap (\Omega \times (-\infty, 0] )\ne \emptyset$.

Since $\widetilde{S}_t$ is compact for every $t \in \R$ and since  $\Sigma$ is properly immersed, there exists $t^* \in \R$ such that $\widetilde{S}_{t^*} \cap \Sigma \ne \emptyset$ and such that $\widetilde{S}_{t} \cap \Sigma = \emptyset$ for $t>t^*$. From \eqref{eq_hypotesis_step_3}, we have that any intersection point $p \in \widetilde{S}_{t^*} \cap \Sigma $ is an interior point for $\widetilde{S}_{t^*}$. We can therefore apply the separating tangency principle and get that $\Sigma = S + t^* e_3$, which is a contradiction because we assumed $\pi(\Sigma) \subseteq \{(x_1, x_2) \in \R^2 \colon x_2 \ge 0\}$.
\end{proof}



\begin{corollary}\label{corollary_of_proposition_refinement}
Let $\Sigma^2 \subseteq \R^3$ be a properly immersed translater contained in a slab. W.l.o.g. \!let us assume 
$$
\conv (\pi(\Sigma)) = \{ (x_1, x_2) \in \R^2 \colon |x_2| < \delta \},
$$
for some $\delta > 0$. Thus $\{x \in \R^3 \colon |x_2| = \delta\} = \pi^{-1}(\partial \conv(\pi(\Sigma)))$. Let $P$ be a vertical plane such that $P \nparallel \{x \in \R^3 \colon |x_2| = \delta\}$.

Then there exist two distinct sequences $(p^1_k), (p^2_k) \subseteq \Sigma \cap P$ satisfying the following properties:
\begin{enumerate}[(i)]
\item $\lim_{k \rightarrow \infty}x_3(p^1_k)= \lim_{k \rightarrow \infty} x_3(p^2_k)  =  \infty $, 
\item $\lim_{k \rightarrow \infty} \dist(p^1_k, L_1)  = \lim_{k \rightarrow \infty} \dist(p^2_k, L_2) = 0 $,
\end{enumerate} 
where $L_1$ and $L_2$ are the two vertical lines  $L_1 = \{x \in \R^3 \colon x_2 = \delta\} \cap P$ and $L_2 = \{x \in \R^3 \colon x_2 = -\delta\} \cap P$.
\end{corollary}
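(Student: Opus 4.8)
The plan is to deduce this corollary from Theorem~\ref{new_refinement} by applying it to suitable subsets of $\Sigma$. Since $P$ is a vertical plane not parallel to $\{x_2 = \pm\delta\}$, the projected line $\pi(P) \subseteq \R^2$ is a line that crosses the strip $\conv(\pi(\Sigma)) = \{|x_2| < \delta\}$ transversally, hitting the two boundary lines $\{x_2 = \delta\}$ and $\{x_2 = -\delta\}$ at two distinct points $q_1, q_2 \in \R^2$. These points satisfy $\pi^{-1}(q_i) \cap P = L_i$. The natural idea is to apply Theorem~\ref{new_refinement} with $q = q_i$ and with $\varrho$ small, which gives $\sup_{\Sigma \cap \pi^{-1}(B_\varrho(q_i))} x_3 = +\infty$; one then extracts a sequence $(\tilde p^i_k)$ in $\Sigma \cap \pi^{-1}(B_\varrho(q_i))$ with $x_3 \to \infty$, and shrinking $\varrho = \varrho_k \to 0$ along a diagonal argument gives a sequence with $\dist(\pi(\tilde p^i_k), q_i) \to 0$ and $x_3 \to \infty$.

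The main obstacle is that Theorem~\ref{new_refinement} only controls points in $\Sigma \cap \pi^{-1}(B_\varrho(q_i))$, which is a full vertical solid cylinder over a disk, not points that actually lie on the plane $P$. To push the sequence onto $P$ I would use a barrier/continuity argument analogous to Step~(ii)--(iii) of the proof of Theorem~\ref{new_refinement}, or more directly a connectedness argument: fix a large height $h$; by Theorem~\ref{new_refinement} there is a point of $\Sigma$ near $q_i$ with $x_3 > h$, and since $\Sigma$ is contained in the slab and properly immersed, one can travel within $\Sigma$ at comparable height. Alternatively — and this is cleaner — one observes that $P$ divides the slab into two closed regions $\mathcal{H}^+, \mathcal{H}^-$, and one applies Theorem~\ref{new_refinement} not to $\Sigma$ itself but shows that $x_3$ is unbounded on $\Sigma \cap \pi^{-1}(B_\varrho(q_i))$ on \emph{both} sides of $P$; since $x_3$ is continuous on $\Sigma$, the intermediate value theorem along paths in $\Sigma$ connecting the two sides forces points of $\Sigma \cap P$ near $q_i$ with arbitrarily large $x_3$. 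To see $x_3$ is unbounded on each side, one uses that $q_i \in \partial\conv(\pi(\Sigma \cap \mathcal{H}^\pm))$ as well (because $\pi(P)$ is transverse to $\partial\conv(\pi(\Sigma))$, each half still projects to a set whose convex hull has $q_i$ on its boundary), and then applies Theorem~\ref{new_refinement} to $\Sigma\cap\mathcal H^\pm$ viewed as a properly immersed translater with boundary — here one must check the theorem's proof goes through with boundary, which it does since the barriers used ($\Delta$-wings, grim reapers) can be positioned to avoid $\partial(\Sigma \cap \mathcal H^\pm) \subseteq P$, or simply invoke that $\sup x_3$ on the smaller set is still $+\infty$ by a direct Omori--Yau argument.

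Concretely, the steps I would carry out are: (1) set up coordinates so that $\pi(P)$ meets $\{x_2 = \delta\}$ at $q_1$ and $\{x_2 = -\delta\}$ at $q_2$, and verify $q_1 \ne q_2$ and $L_i = \pi^{-1}(q_i)\cap P$; (2) show that for each $i$ and each $\varrho > 0$, $\sup_{\Sigma \cap P \cap \pi^{-1}(B_\varrho(q_i))} x_3 = +\infty$, using Theorem~\ref{new_refinement} applied on each side of $P$ together with a connectedness/intermediate-value argument in $\Sigma$; (3) for each $i$, choose $\varrho_k \downarrow 0$ and pick $p^i_k \in \Sigma \cap P \cap \pi^{-1}(B_{\varrho_k}(q_i))$ with $x_3(p^i_k) \ge k$, which automatically gives $\lim_k x_3(p^i_k) = +\infty$ and $\lim_k \dist(\pi(p^i_k), q_i) = 0$; (4) since $p^i_k \in P$ and the slab bounds $|x_2|$, the convergence $\pi(p^i_k) \to q_i$ upgrades to $\dist(p^i_k, L_i) \to 0$ because the horizontal distance from $p^i_k$ to $L_i$ equals $\dist(\pi(p^i_k), q_i)$ (both lie in $P$). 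The sequences are distinct since they accumulate near the two distinct lines $L_1 \ne L_2$. I expect step (2) — getting the points actually onto $P$ rather than merely in the solid cylinder over $B_\varrho(q_i)$ — to be the only nontrivial point, and the cleanest route is the two-sided application of Theorem~\ref{new_refinement} combined with continuity of $x_3$ along $\Sigma$.
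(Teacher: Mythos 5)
Your plan correctly isolates the crux: Theorem~\ref{new_refinement} gives points of $\Sigma$ with $x_3\to\infty$ in the solid cylinder $\pi^{-1}(B_\varrho(q_i))$, but the corollary needs points actually on $P$. However, the route you propose for closing this gap — a two-sided application of Theorem~\ref{new_refinement} plus an intermediate-value argument — has a genuine hole, and it is not the route the paper takes.

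First, you assert $q_i\in\partial\conv(\pi(\Sigma\cap\mathcal{H}^\pm))$ for \emph{both} halfspaces. This is exactly what one would need, but it is not available: a priori $\Sigma$ could approach $L_1$ only from one side of $P$ at high altitude, in which case $\Sigma\cap\mathcal{H}^+\cap\pi^{-1}(B_\varrho(q_1))$ could be empty for small $\varrho$ and $q_1\notin\overline{\conv(\pi(\Sigma\cap\mathcal{H}^+))}$. Arguing that both halves see $q_1$ on their convex hull boundary is circular, since ruling out a one-sided approach is essentially the content of the corollary. Second, Theorem~\ref{new_refinement} is stated and proved for complete translaters without boundary; invoking it for $\Sigma\cap\mathcal{H}^\pm$ introduces a boundary along $P$ in addition to the boundary at $\{x_3=0\}$ that the adiabatic trick handles, and the grim-reaper/$\Delta$-wing barrier steps would have to be re-justified against that new boundary — this is not a detail one can wave away. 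Third, even granting high points of $\Sigma$ near $L_1$ on both sides of $P$, the IVT step is not sound as stated: the two points need not lie in the same connected component of $\Sigma\cap\pi^{-1}(B_\varrho(q_1))$, and a path in $\Sigma$ joining them could dip to low altitude or exit the cylinder before crossing $P$, so continuity of $x_3$ does not produce a high crossing point near $L_1$.

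The paper's actual proof is by a direct contradiction that sidesteps all of this. Assuming no sequence $(p^1_k)$ exists means $x_3$ is bounded above on $\Sigma\cap P$ near $L_1$; translate so these points have $x_3<0$. Theorem~\ref{new_refinement} then supplies a sequence $(q_k)$ with $x_3(q_k)\nearrow\infty$ and $\dist(q_k,L_1)\to 0$, and after choosing the halfspace $\mathcal{H}$ bounded by $P$ that contains a tail of $(q_k)$, one places a vertical cylinder $\mathcal{C}$ tangent to both $P$ and $\{x_2=\delta\}$ inside the wedge $\mathcal{H}\cap\{x_2\le\delta\}$. Letting $\mathcal{A}$ be the bounded corner region cut off by $\mathcal{C}$ and $f=\dist(\cdot,\mathcal{L})$ the distance to the cylinder axis (set to the constant $R$ off $\mathcal{A}$), one applies the Omori--Yau maximum principle to $f$ on $\widetilde{\Sigma}=\Sigma\cap\{x_3\ge 0\}$: the existence of $(q_k)$ forces $\sup f=\dist(\mathcal{L},L_1)>R$, and since $\Sigma\cap P\cap\{x_3\ge 0\}$ stays away from $\mathcal{A}$, $f$ is smooth near its supremum, so the same computation as in Theorem~\ref{new_refinement} yields $\Delta^\Sigma f>0$ at the Omori--Yau sequence, a contradiction. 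The key differences from your proposal: only one side of $P$ is ever used, no decomposition of $\Sigma$ into pieces with boundary is needed, and the plane $P$ enters only as one face of the wedge in which the barrier cylinder sits.
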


\begin{proof}
Assume by contradiction that the statement is not true. For instance, let us assume that there is no sequence $(p^1_k)$ satisfying $(i)$ and $(ii)$.  Then this means that $x_3 $ is bounded from above on 
$\{x \in \Sigma \cap P \colon \dist(x, L_1) \le \varepsilon\}$ for some $\varepsilon > 0$.
W.l.o.g. we can assume that 
\begin{equation}\label{x_3_neg_corollary}
x_3 < 0
\end{equation}
for every $x = (x_1, x_2, x_3) \in \Sigma \cap P$ such that $\dist(x, L_1) < \varepsilon$.

Let $\mathcal{H}$ be one of the two halfspaces such that $\partial \mathcal{H} = P$. Note that from Theorem \ref{new_refinement}, we can assume that $\mathcal{H} \cap \Sigma$ contains a sequence of points $(q_k) \subseteq \mathcal{H} \cap \Sigma$ such that $x_3(q_k) \nearrow \infty$ and $\dist(q_k, L_1) \rightarrow 0$. Let $\mathcal{C}$ be a vertical cylinder such that $\mathcal{C} \subseteq \mathcal{H} \cap \{x \in \R^3 \colon x_2 \le \delta\}$ and such that $\mathcal{C}$ is tangent to $P$ and to $\{x \in \R^3 \colon x_2 = \delta\}$.  Observe that $\pi \lef \lef \mathcal{H} \cap \{x \in \R^3 \colon x_2 \le \delta \} \rig \setminus \mathcal{C}\rig$ consists of two connected components, one bounded and another one unbounded. Let $\mathcal{A}$ be the bounded one. Moreover let $\mathcal{L}$ be the axis of the cylinder. Then we define the function $f \colon \{(x_1, x_2) \in \R^2 \colon |x_2| < \delta\} \to \R$ as follows
$$
f(x) \coloneqq
\begin{cases}
\dist(x, \mathcal{L}) \qquad &\text{ if } \pi(x) \in \mathcal{A}\\
R &\text{ if } \pi(x) \notin \mathcal{A}.
\end{cases}
$$
where $R$ is the radius of $\mathcal{C}$. Let us consider the restriction $f|_{\widetilde{\Sigma}}$,  where $\widetilde{\Sigma}$ is the translater with boundary defined as 
$$
\widetilde{\Sigma} \coloneqq \Sigma \cap \{x \in \R^3 \colon x_3 \ge 0\}.
$$
Note that, because of the existence of the sequence $(q_k)$, we have that
$$
\sup_{\widetilde{\Sigma}}(f) = \dist(\mathcal{L}, L) > R
$$
and from \eqref{x_3_neg_corollary} follows that $f|_{\widetilde{\Sigma}}$ is smooth on the set 
$$
\{x \in \widetilde{\Sigma} \colon f|_{\widetilde{\Sigma}}(x) > \sup_{\widetilde{\Sigma}} f - \varepsilon \}.
$$
We can therefore apply the Omori-Yau principle directly, without the need of the ``adiabatic trick'', in order to get a contradiction. The computations are similar (and simpler, since we do not need the cut-off function here) to the ones in the proof of Theorem \ref{new_refinement}.
\end{proof}

We include here a Bernstein type theorem for $1$-periodic translaters, which will not be used in the proof of Theorem \ref{theorem_slab} but it is worth mentioning. It is a simple consequence of Theorem \ref{new_refinement}.
\begin{corollary}[Bernstein type theorem for 1-periodic translaters]\label{bernstein_periodic}
Let $\Sigma^2 \subseteq \R^{3}$ be a properly immersed  translater such that $\Sigma \subset \mathcal{H}$, where $\mathcal{H}$ is a vertical halfspace. Let us assume that $\Sigma$ is $1$-periodic in the $e_{3}$-direction, i.e. there exists $a > 0$ such that 
\begin{equation}\label{1-period}
\Sigma = \Sigma + a e_{3}.
\end{equation}
Then $\Sigma$ is a vertical hyperplane.
\end{corollary}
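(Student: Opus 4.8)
The plan is to prove that $\conv(\pi(\Sigma))$ must be a line, since this immediately forces $\Sigma$ to be a vertical plane. First I would invoke Remark~\ref{remark_asymptotic_planes} (the result of \cite{cm19}): $\conv(\pi(\Sigma))$ is a line, a strip, a half-plane, or all of $\R^2$; and since $\Sigma\subseteq\mathcal{H}$ with $\mathcal{H}$ a vertical halfspace, the projection $\pi(\Sigma)$ lies in a closed half-plane of $\R^2$, which rules out $\R^2$. I then check, case by case, that $\Sigma$ is a vertical plane. In the line case $\ell$ this is immediate: $\pi(\Sigma)\subseteq\ell$, so $\Sigma$ is contained in the vertical plane $\pi^{-1}(\ell)$, and a connected, properly immersed surface contained in a plane is open and closed in that plane, hence equals it, so $\Sigma=\pi^{-1}(\ell)$.

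So suppose $\conv(\pi(\Sigma))$ is a strip or a half-plane, so that $\partial\conv(\pi(\Sigma))$ contains a line. After an ambient isometry of $\R^3$ commuting with $e_3$-translations (a rotation about a vertical axis followed by a horizontal translation, which preserves both the translater equation~\eqref{translating_equation} and the periodicity~\eqref{1-period}) I may assume that $\ell:=\{x_2=0\}$ is a connected component of $\partial\conv(\pi(\Sigma))$ and $\pi(\Sigma)\subseteq\{x_2\ge 0\}$. Let $P:=\{x\in\R^3:x_2=0\}$ be the corresponding vertical plane; note $P$ is itself a translater and $\Sigma$ lies on one side of it. The key step is to produce a point of $\Sigma\cap P$. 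Fix any $q_0\in\ell$, so $q_0\in\partial\conv(\pi(\Sigma))$; by Theorem~\ref{new_refinement} one has $\sup_{\Sigma\,\cap\,\pi^{-1}(B_{1/n}(q_0))}x_3=+\infty$, in particular this set is nonempty, so I may choose $\hat p_n$ in it. Using \eqref{1-period} I replace $\hat p_n$ by $p_n:=\hat p_n-k_n\,a\,e_3\in\Sigma$ with $k_n\in\mathbb{Z}$ such that $x_3(p_n)\in[0,a)$. Then $\pi(p_n)=\pi(\hat p_n)\to q_0$ while $x_3(p_n)$ stays bounded, so all $p_n$ lie in the compact region $\Sigma\cap\big(\overline{B_1(q_0)}\times[0,a]\big)$ (here I use that $\Sigma$ is properly immersed). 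Passing to a subsequence, $p_n\to p_*\in\Sigma$ with $\pi(p_*)=q_0$, i.e.\ $p_*\in\Sigma\cap P$.

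To conclude, I would note that, since $\Sigma\subseteq\{x_2\ge 0\}$ and $p_*\in\{x_2=0\}$, the point $p_*$ is an interior minimum of $x_2|_\Sigma$; hence $e_2$ is normal to $\Sigma$ at $p_*$ and $T_{p_*}\Sigma=T_{p_*}P$. Thus $\Sigma$ and $P$ are two translaters, tangent at the interior point $p_*$, with $\Sigma$ lying locally on one side of $P$; the separating tangency principle (Lemma~2.4 in \cite{mo14}) then yields $\Sigma=P$, a vertical plane. In every case, therefore, $\Sigma$ is a vertical plane.

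The step I expect to be the main obstacle is the production of $p_*\in\Sigma\cap P$: a priori $\pi(\Sigma)$ might only approach the boundary line $\ell$ ``at infinity'' along the $x_1$-direction, so that no point of $\Sigma$ lies over $\ell$. It is exactly Theorem~\ref{new_refinement} that forces $\pi(\Sigma)$ to come close to the \emph{fixed} point $q_0\in\ell$, and the periodicity~\eqref{1-period} is what keeps the corresponding points of $\Sigma$ confined to a compact region, so that a subsequential limit $p_*$ genuinely belongs to $\Sigma$. Everything afterwards --- the tangency of $\Sigma$ and $P$ at $p_*$ and the maximum-principle step --- is routine.
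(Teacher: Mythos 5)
Your proof is correct and follows essentially the same route as the paper's: use Theorem~\ref{new_refinement} together with the $1$-periodicity to produce a point of $\Sigma\cap\partial\mathcal{H}$, then invoke the separating tangency principle against the vertical plane $\partial\mathcal{H}$. You have simply spelled out what the paper leaves implicit (the WLOG reduction, the elementary line case, and the compactness/subsequence argument yielding $p_*$).
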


\begin{proof}
Let us assume $\partial \mathcal{H} \subseteq \pi^{-1}\lef \partial \conv\lef \pi \lef \Sigma \rig \rig \rig$. From Theorem \ref{new_refinement} and the $1$-periodicity assumption \eqref{1-period}, it follows that $\Sigma \cap \partial \mathcal{H}$. The conclusion follows from the separating tangency principle. 
\end{proof}

\begin{remark}
Observe that nontrivial periodic translaters do exist but the known examples are in line with Corollary \ref{bernstein_periodic}, because their period is a vector orthogonal to $e_{3}$ (see \cite{ng09} and the very recent paper \cite{hmw19b}).
\end{remark}

%
%
%
%
%
%
%


\section{The structure of the set $Z$.}\label{section_Z}

In this section we  assume $\Sigma$ to be a properly embedded translating soliton. We want to study the structure of the intersection of $\Sigma$ with a vertical plane $P$ and we denote such intersection as 
$$
Z\coloneqq \Sigma \cap P.
$$
Note that $Z$ can be described as the zero set of a function defined on $\Sigma$ as follows. Let $p \in Z$ and let $V \in \mathbb{S}^2$ be a unit vector orthogonal to $P$. Then $Z$ is the zero set of the function
$$
x \mapsto \langle V, x - p \rangle, \qquad x \in \Sigma.
$$

The structure of $Z$ is described by the following lemma, which is inspired by Lemma 6 in \cite{br16} and \cite{ro95}.

\begin{lemma}\label{structure_Z} Let us assume  that $\Sigma$ is not flat, i.e. is not a vertical plane. Then for each point $x \in Z$ there exists an open neighborhood $x \in U \subseteq \Sigma$, such that $Z \cap U$ is a union of finitely many $C^2$-arcs $\Gamma_1, \dots, \Gamma_m$ which intersect transversally at $x$. The number $m$ is the vanishing order of the function 
$
x \mapsto \langle V, x - p\rangle
$
at $p$.

Globally, the set $Z$ is the union of countably many 1-dimensional properly immersed $C^2$-submanifolds  without boundary of $\R^3$ and they may intersect pairwise only at isolated points.
\end{lemma}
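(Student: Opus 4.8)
The plan is to exhibit $Z$ as the nodal set of a solution of a second order linear elliptic equation on $\Sigma$ and then to quote the classical description of such nodal sets in dimension two due to Cheng \cite{ch76}, in the spirit of Lemma 6 of \cite{br16}. Fix $p \in Z$, let $V \in \mathbb{S}^2$ be a unit normal to the vertical plane $P$ (so $V \perp e_3$), and set $f := \langle V, \cdot - p\rangle$ on $\Sigma$, so that $Z = f^{-1}(0)$. Using the identity $\Delta^{\Sigma} x = \langle e_3, \nu\rangle \nu$ (which is \eqref{translating_equation} written for the position vector, as $\Delta^{\Sigma}x$ is the mean curvature vector), together with $\nabla^{\Sigma} f = V - \langle V, \nu\rangle \nu$, $\nabla^{\Sigma} x_3 = e_3 - \langle e_3, \nu\rangle \nu$ and $\langle V, e_3\rangle = 0$, a direct computation gives
\begin{equation*}
\Delta^{\Sigma} f + \langle \nabla^{\Sigma} x_3, \nabla^{\Sigma} f \rangle = 0 .
\end{equation*}
Equivalently, $g := e^{x_3/2} f$ has the same zero set $Z$ and solves $\Delta^{\Sigma} g = \tfrac14(1 + H^2)\, g$, i.e. an equation of the form $\Delta^{\Sigma} g = h g$ with $h \in C^{\infty}(\Sigma)$. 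One must also know that $f \not\equiv 0$: if $f$ vanished on a nonempty open subset, unique continuation for the equation above (or the real analyticity of $\Sigma$ and $f$) would force $f \equiv 0$ on the connected component of $\Sigma$ through $p$, so that component would lie in $P$; being a connected, properly embedded surface without boundary contained in a plane it would then equal $P$, contradicting the hypothesis that $\Sigma$ is not a vertical plane.

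Granting $f \not\equiv 0$, I would invoke Cheng's local structure theorem \cite{ch76} for nodal sets of solutions of $\Delta u = h u$ on surfaces: at every $x \in Z$ the vanishing order $m = m(x)$ of $f$ (equivalently of $g$) at $x$ is finite, the leading term of the Taylor expansion of $g$ in geodesic normal coordinates centred at $x$ is a homogeneous harmonic polynomial of degree $m$, and in a suitable neighbourhood $x \in U \subseteq \Sigma$ the set $Z \cap U$ is $C^1$-diffeomorphic to the zero set of that polynomial, that is, to $m$ straight lines through the origin (equally spaced when $m \ge 2$). Hence $Z \cap U = \Gamma_1 \cup \dots \cup \Gamma_m$ with the $\Gamma_i$ arcs through $x$ meeting pairwise transversally at $x$ and nowhere else in $U$, and $m$ is the vanishing order of $f$ at $x$; away from $x$ the $\Gamma_i$ are regular level sets of $f$, hence smooth, and by \cite{ch76} they are $C^2$ up to $x$ as well. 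This gives the local part of the lemma.

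For the global statement, put $S := \{x \in \Sigma : f(x) = 0,\ \nabla^{\Sigma} f(x) = 0\}$, the set of points with $m(x) \ge 2$. I claim $S$ is discrete: if $x_j \to x_0$ with $x_j \in S \setminus \{x_0\}$, then $x_0 \in S$, and in a neighbourhood $U$ of $x_0$ as above $Z$ consists of $m(x_0)$ arcs meeting only at $x_0$, so near any $x_j \in U \setminus \{x_0\}$ the set $Z$ is a single $C^2$ arc, whence $m(x_j) = 1$, contradicting $x_j \in S$. Since $\Sigma$ is properly embedded, $Z$ is closed in $\R^3$; $Z \setminus S$ is an embedded $C^2$ one-dimensional submanifold of $\R^3$ without boundary; and each connected component of $Z \setminus S$ is a properly embedded $C^2$ arc whose closure is obtained by adjoining at most points of the discrete set $S$. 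Splitting $Z$ at these isolated points therefore presents it as a union of countably many (by second countability) properly immersed $C^2$ submanifolds without boundary of $\R^3$ whose only mutual or self intersections occur there, which is the global part. I expect two points to require care: setting up the elliptic equation and verifying the hypotheses of \cite{ch76} (non-triviality and interior regularity), and proving that $S$ is discrete — a priori an entire arc of $Z$ could consist of higher order zeros, and the argument above excludes this by playing the local structure at a putative accumulation point against that at nearby points.
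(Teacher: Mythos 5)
Your proposal is correct and follows essentially the same route as the paper: you derive the drift equation $\Delta^{\Sigma} f + \langle \nabla^{\Sigma} f, e_3\rangle = 0$, pass to $g = e^{x_3/2}f$ to obtain $\Delta^{\Sigma} g = hg$, and then invoke Cheng's nodal-set theorem, just as the paper does. You supply a bit more detail than the paper (the non-triviality of $f$ via unique continuation, and an explicit discreteness argument for the singular set), but these only flesh out the step that the paper dismisses with ``follows immediately from the first part and from the properness of $\Sigma$.''
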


\begin{proof}
Let $f(x) \coloneqq \langle V, x - p \rangle$. Observe that $\nabla^{\Sigma}f = V^\top$.  Moreover, using the translater equation \eqref{translating_equation} and the fact that $V\perp e_3$, we have
\begin{align*}
\Delta^{\Sigma} f &= \di^\Sigma(V^\top)  =\di^{\Sigma}(V) - \di^{\Sigma}(V^{\perp})\\
&= - \langle V, \nu \rangle H = \langle V, e_3^{\perp} \rangle = - \langle V, e_3^\top \rangle = - \langle \nabla^\Sigma f, e_3 \rangle.
\end{align*} 
Thus  $f$ satisfies the following elliptic equation
\begin{equation}\label{pde_f_Z}
\Delta^\Sigma f + \langle \nabla^{\Sigma} f, e_3 \rangle = 0.
\end{equation}
Therefore,
\begin{align*}
\Delta^{\Sigma} (e^{\frac{x_3}{2}} f) &= \di^\Sigma( \nabla^{\Sigma} (e^{\frac{x_3}{2}} f) ) \\
&= \di^\Sigma \lef e^{\frac{x_3}{2}} \frac{f}{2} e_3^T +  e^{\frac{x_3}{2}} \nabla^\Sigma f \rig \\
&= e^{\frac{x_3}{2}} \lef \frac{f}{4} |e^T_3|^2 + \frac{f}{2} \di^\Sigma(e^\top_3) + \langle \nabla^\Sigma f, e_3 \rangle + \Delta^{\Sigma} f \rig \\
&=\lef e^{\frac{x_3}{2}}f \rig \lef \frac{|e^T_3|^2}{4} + \frac{\di^\Sigma(e^\top_3)}{2} \rig. 
\end{align*}
The conclusion of the first part of the statement follows from applying Theorem 2.5 in \cite{ch76} to the function $x \mapsto e^{\frac{x_3}{2}}f(x)$ and observing that its zero set coincides with the zero set of $f$.

The second part of the statement follows immediately from the first part and from the properness of $\Sigma$. 
\end{proof}

\begin{remark}\label{remark_Z}
We are mainly interested in the special case where  
$$
H(p) = 0,\quad  P = T_p \Sigma, \quad V = \nu(p),
$$
where $T_p \Sigma$ denotes, with a little abuse of notation, the geometric tangent plane of $\Sigma$ at $p$. Observe that from equation \eqref{translating_equation}, $H(p) = 0 $ if and only if $T_p\Sigma$ is a vertical plane. 
Note that in this case $f \colon x \mapsto \langle V, x -p  \rangle$ has vanishing order $m \ge 2$ at $p$, because $\nabla^\Sigma f = V^\top$ and $V = \nu(p)$, we have that   $\nabla^\Sigma f|_{p} = 0$. Therefore there exists a neighborhood $U$ of $p$ such that $Z \cap U$ consists of at least two $C^2$-curves intersecting transversally at $p$. 
\end{remark}
We have also the following  information about $Z$. 

\begin{lemma}\label{refinement_Z}
Under the same assumptions as Lemma \ref{structure_Z}, if we further assume $\Sigma$ to be simply connected, then each connected component of $Z$ is simply connected. In particular,  $Z$ is the union of the images of countably many,  $C^2$-embeddings $\gamma_j \colon \R \to \Sigma$ which may intersect pairwise at most at one point. 
\end{lemma}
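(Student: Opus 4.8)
The plan is to combine the local structure of $Z$ from Lemma \ref{structure_Z} with the elliptic equation \eqref{pde_f_Z} to rule out any embedded loop in $Z$, hence to prove simple connectivity of each component of $Z$, and then deduce the parametrization statement. First I would set up the topological picture: by Lemma \ref{structure_Z}, a connected component $Z_0$ of $Z$ is a connected, locally finite union of properly immersed $C^2$ arcs meeting only at isolated points, where at each such point finitely many arcs cross transversally. Thus $Z_0$ is naturally a (locally finite) graph embedded in $\Sigma$, with vertices the multiple points and edges the open arcs between them. Simple connectivity of $Z_0$ means this graph is a tree (contains no cycle), and once that is established, an Euler-characteristic / tree argument produces the $C^2$-embeddings $\gamma_j \colon \R \to \Sigma$: pick a maximal path structure on the tree, orient it, and read off that each edge-path from one end to another is the image of an embedded line, and two such images meet in at most a point (a single vertex) since the underlying graph is a tree.

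The heart of the matter is the no-cycle claim, and here is where I would use the PDE. Suppose for contradiction that $Z_0$ contains an embedded cycle $C$. Since $\Sigma$ is simply connected, $C$ bounds a disk-type region $D \subseteq \Sigma$ (Jordan curve theorem on the simply connected surface $\Sigma$; $C$ separates $\Sigma$ and exactly one complementary component has compact closure, which is a topological disk). On $D$ the function $f$ (or rather $w \coloneqq e^{x_3/2} f$, which satisfies $\Delta^\Sigma w = c\, w$ with $c = \tfrac14 |e_3^\top|^2 + \tfrac12 \di^\Sigma(e_3^\top)$, exactly as computed in the proof of Lemma \ref{structure_Z}) vanishes on all of $\partial D = C$ but is not identically zero on $D$ — indeed $f$ changes sign across each arc of $Z_0$, so $f$ is nonzero on the interior of $D$. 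Wait: I must be careful that $f$ really changes sign and that $\partial D$ is exactly where $f$ vanishes near $D$; the local structure lemma guarantees that near a point of an arc $Z_0$ is a single transversal curve, so $f$ does change sign there. Then $w$ is a nonzero solution of $\Delta^\Sigma w = c\,w$ on $D$ with $w|_{\partial D} = 0$, i.e. $0$ is a Dirichlet eigenvalue of $\Delta^\Sigma - c$ on $D$, and $w$ is an eigenfunction. But $w$ also vanishes at interior points (the vertices of $Z_0$ lying on $C$, and more importantly $w$ changes sign inside $D$ if $Z_0$ enters $D$) — more to the point, the clean contradiction is: take the smallest such disk, so that the interior of $D$ contains no points of $Z_0$; then $w$ has constant sign on $\mathrm{int}(D)$, so it is a first Dirichlet eigenfunction with eigenvalue $0$; now pick a point $x \in C$ and recall that near $x$ the set $Z_0$ is a single $C^2$ arc crossing $C$ transversally unless $x$ is a vertex, so $f$, hence $w$, changes sign across $C$ at $x$, which forces $\nabla^\Sigma w(x) \ne 0$ by Hopf's lemma — consistent so far — so I instead use that at a vertex on $C$ (vanishing order $\ge 2$) Hopf's lemma is violated, OR, if $C$ has no vertex, then $C$ is a single smooth closed arc and by Lemma \ref{structure_Z} it is a properly immersed submanifold without boundary, fine, and then I argue that $w$ being a sign-definite eigenfunction on $D$ with eigenvalue $0$ means $0 = \lambda_1(D)$ for the operator $-\Delta^\Sigma + c$; by domain monotonicity of $\lambda_1$ and the fact that $D$ is a proper subdomain of $\Sigma$ (where a positive solution $w > 0$ exists globally — indeed $w = e^{x_3/2} f$ with $f = \langle V, x - p\rangle$ can be chosen, after reselecting the base point, positive on all of $\Sigma$? no, $f$ vanishes on $Z$).

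Let me restructure the obstacle cleanly: the \emph{main obstacle} is executing the separation/eigenvalue argument rigorously, and the cleanest route is the maximum-principle one the author hints at. Assume $Z_0$ has a cycle; choose an \emph{innermost} one bounding a disk $D$ with $\mathrm{int}(D) \cap Z_0 = \emptyset$ (possible by local finiteness, taking $D$ of minimal "area" or by a standard innermost-curve argument). Then $f$ has a strict sign, say $f > 0$, on $\mathrm{int}(D)$ and $f = 0$ on $\partial D$. Apply the strong maximum principle / Hopf boundary lemma to $w = e^{x_3/2}f \ge 0$ solving the linear equation $\Delta^\Sigma w - c\, w = 0$: at any point $x_0 \in \partial D$ we get $\partial_n w(x_0) < 0$ (inward normal derivative strictly negative), equivalently $\nabla^\Sigma f(x_0) \ne 0$ and points strictly into $D$; but by Remark \ref{remark_Z} / Lemma \ref{structure_Z}, if $x_0$ is a multiple point of $Z_0$ (vanishing order $m \ge 2$) then $\nabla^\Sigma f(x_0) = 0$, contradiction. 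So $\partial D$ contains no multiple points, hence $\partial D$ is a single smooth closed arc $\Gamma_i$ which is, by Lemma \ref{structure_Z}, a properly immersed submanifold \emph{without boundary} — but a closed loop is allowed here, so I still need the final contradiction: $f$ changes sign across $\Gamma_i$ (transversal single arc), so $f < 0$ on the other side of $\Gamma_i$ near $\partial D$; now apply the same Hopf argument to the neighboring region on the outside, or more simply: $w$ is then a solution with $w|_{\partial D} = 0$, $w>0$ inside, so $\lambda_1(-\Delta^\Sigma + c; D) = 0$; but the \emph{same} eigenvalue problem on the strictly larger domain obtained by pushing $\Gamma_i$ slightly outward (possible since outside of $\Gamma_i$, $Z_0$ stays away from a neighborhood) has strictly smaller first eigenvalue, yet $w$ extends (by the sign of $f$) to a function that... — this is getting delicate, so in the write-up I would instead invoke the cleaner statement: a nonzero solution of a linear homogeneous second-order elliptic equation on a disk cannot vanish on the entire boundary and have an interior zero of order $\ge 2$; combined with the fact that $\Sigma$ simply connected forces any innermost cycle's disk to have such a structure, this is the contradiction. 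I expect the bookkeeping around innermost curves and the precise invocation of Hopf's lemma versus the eigenvalue comparison to be the only real work; the topological deduction of the $\gamma_j$'s afterward is routine graph theory on a tree.
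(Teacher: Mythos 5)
The key gap in your proposal is that you work with the wrong form of the PDE. You pass to $w = e^{x_3/2}f$, which satisfies the Schr\"odinger--type equation $\Delta^\Sigma w = c\,w$ with a zeroth-order coefficient $c$ of unspecified sign. For such an operator the classical Dirichlet maximum principle fails (there can be a nontrivial first Dirichlet eigenfunction with eigenvalue $0$), and this is precisely why you are forced into the Hopf-lemma and eigenvalue-comparison detours, which you then do not close: the Hopf step only applies at smooth boundary points, and your attempt to cover the vertex-free case via eigenvalue comparison is abandoned mid-argument. The ``clean statement'' you finally propose to invoke --- that a nonzero solution of a linear homogeneous second-order elliptic equation on a disk cannot vanish on the whole boundary and have an interior zero of order $\ge 2$ --- is not a valid theorem (a second Dirichlet eigenfunction already contradicts the vanishing-boundary part), so the proof as written is incomplete.

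The paper's route avoids all of this by staying with the original equation \eqref{pde_f_Z}, $\Delta^\Sigma f + \langle \nabla^\Sigma f, e_3 \rangle = 0$, which has \emph{no} zeroth-order term. Given any non-nullhomotopic Jordan curve $\delta$ in a component of $Z$, simple connectivity of $\Sigma$ (which, being complete, noncompact, simply connected, is homeomorphic to $\R^2$) produces a bounded open set $\Omega$ with $\partial\Omega = \delta(\mathbb{S}^1) \subseteq Z$, so $f|_{\partial\Omega} = 0$; the classical Dirichlet maximum principle for operators with no zeroth-order term then forces $f \equiv 0$ on $\Omega$, hence $\Omega \subseteq Z$, contradicting the local structure of $Z$ from Lemma \ref{structure_Z}. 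No innermost-curve selection, no Hopf lemma, and no eigenvalue comparison are needed. Your topological identification of cycles and the reduction to the graph/tree picture are fine, but the analytic core of your argument needs to be replaced by this direct maximum-principle step applied to $f$ rather than to $w$.
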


\begin{proof}
Assume by contradiction that there exists a continuous and injective loop $\delta \colon \mathbb{S}^1 \to Z$ which is not homotopically trivial. Then  $\delta$ is a Jordan curve in $\Sigma$, and since we are assuming $\Sigma$ to be homeomorphic to the plane, from the Jordan theorem, the image of $\delta$ is the boundary of a nonempty, bounded open set $\Omega \subseteq \Sigma$. This means that the function $f(x) = \langle V, x -p  \rangle$ satisfies the following boundary problem:
\begin{align*}
\begin{cases}
\Delta^{\Sigma} f + \langle \nabla^\Sigma f, e_3 \rangle = 0 \qquad &\text{in } \Omega  \\
f = 0 \qquad &\text{on } \partial \Omega.
\end{cases}
\end{align*}
From the maximum principle, it follows that $f$ is identically zero in $\Omega$, which means $\Omega \subseteq Z$. But this contradicts Lemma \ref{structure_Z}. 
\end{proof}


\begin{lemma} \label{corollary_topology}
Let $\Sigma^2 \subseteq \R^3$ be a simply connected, properly embedded translater.
Let $\mathcal{H} \subseteq \R^3 $ be a vertical halfspace. 

Then each connected component of  $\Sigma \cap \mathcal{H}$  is simply connected.
\end{lemma}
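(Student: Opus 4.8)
The plan is to argue by contradiction in the spirit of Lemma~\ref{refinement_Z}, applying the strong maximum principle to the affine function that cuts out the half-space rather than to the function $f$ defining $Z$.

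First I would record the relevant PDE. Write $\mathcal{H} = \{x \in \R^3 \colon \langle V, x\rangle \le c\}$ for a horizontal unit vector $V \perp e_3$ and some $c \in \R$, so that $\Sigma \cap \mathcal{H} = \{x \in \Sigma \colon g(x) \le 0\}$ with $g(x) \coloneqq \langle V, x\rangle - c$. Since $g$ differs from the function $x \mapsto \langle V, x - p\rangle$ considered in Lemma~\ref{structure_Z} only by an additive constant, the computation carried out there shows that $g$ solves the drift-Laplace equation
\begin{equation*}
\Delta^{\Sigma} g + \langle \nabla^{\Sigma} g, e_3\rangle = 0 \qquad \text{on } \Sigma .
\end{equation*}
This equation has no zeroth-order term, so the Hopf strong maximum principle applies: a solution on a connected open set which attains an interior maximum is constant there.

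Next I would use the topology of $\Sigma$. Translaters are noncompact: on a closed surface the function $x_3|_\Sigma$ would attain an interior maximum, at which $\nabla^\Sigma x_3 = 0$ and $\Delta^\Sigma x_3 \le 0$, contradicting the identity $\Delta^\Sigma x_3 = 1 - |\nabla^\Sigma x_3|^2$ that follows from \eqref{translating_equation}. Hence a simply connected, embedded, complete translater is homeomorphic to $\R^2$, and the Jordan--Schoenflies theorem is available inside $\Sigma$. Suppose now that some connected component $\Omega$ of $\Sigma \cap \mathcal{H}$ is not simply connected. As in the proof of Lemma~\ref{refinement_Z}, there is then a continuous injective loop $\delta \colon \mathbb{S}^1 \to \Omega$ that is not contractible in $\Omega$, and by Jordan--Schoenflies it bounds a closed topological disk $\overline{D} \subseteq \Sigma$.

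Finally I would extract the contradiction. If $\overline{D}$ were contained in $\mathcal{H}$, then, being a connected subset of $\Sigma \cap \mathcal{H}$ meeting $\Omega$ along $\delta$, it would lie in the component $\Omega$, so $\delta = \partial\overline{D}$ would be contractible in $\overline{D}\subseteq\Omega$, a contradiction; hence $\overline{D}$ contains a point with $g > 0$. Since $\delta\subseteq\mathcal{H}$ we have $\sup_{\partial\overline{D}} g = \sup_{\delta} g \le 0 < \max_{\overline{D}} g$, so $g|_{\overline{D}}$ attains its maximum at an interior point of $\overline{D}$, and by the strong maximum principle $g$ is constant on $\overline{D}$; this forces $g$ to equal a positive constant on $\delta$, contradicting $\delta \subseteq \mathcal{H}$. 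Thus every embedded loop in $\Omega$ bounds a disk in $\Omega$, so $\Omega$ is simply connected. I expect the only genuinely delicate point to be the purely topological one — producing a simple essential loop that bounds an honest disk inside the abstract surface $\Sigma$, and keeping track of whether $\mathcal{H}$ is taken open or closed — since the analytic content is immediate once $g$ and its equation are identified.
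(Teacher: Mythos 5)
Your proposal is correct and follows essentially the same route as the paper: define the linear function cutting out $\mathcal{H}$, note it satisfies the drift equation $\Delta^{\Sigma} g + \langle \nabla^{\Sigma} g, e_3\rangle = 0$ from Lemma~\ref{structure_Z}, use simple connectedness of $\Sigma$ to cap off a noncontractible embedded loop in a component $\Omega$ with a disk, observe the disk must leave $\mathcal{H}$ (else the loop would be contractible in $\Omega$), and derive a contradiction from the strong maximum principle. Your version spells out two points the paper leaves implicit — that $\Sigma$ is noncompact and hence homeomorphic to $\R^2$, and why the capping disk cannot stay inside $\mathcal{H}$ — but the argument is the same.
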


\begin{proof}
Let $P$ be the vertical plane $P = \partial \mathcal{H}$, $p \in Z = P \cap \Sigma$ and let $V$ be the orthogonal unit vector to $P$ pointing outside $\mathcal{H}$. Let us assume, for contradiction, that there exists an embedding  $\gamma \colon \mathbb{S}^1 \to \Sigma \cap \mathcal{H}$ which is not homotopically trivial in $\Sigma \cap \mathcal{H}$.        Since $\Sigma$ is simply connected, there exists $\Omega \subseteq \Sigma$ such that 
$ \partial \Omega = \gamma(\mathbb{S}^1) $. 
Let $f \colon \Sigma \to \R$ be defined as $f(x) = \langle V, x - p \rangle$ as above. Observe that 
\begin{equation*}
f|_{\partial \Omega} \le 0.
\end{equation*} 
Since we are assuming $\gamma$ is not homotopically trivial in $\Sigma \cap \mathcal{H}$, we have that $\Omega \nsubseteq \Sigma \cap \mathcal{H}$. This implies
\begin{equation}\label{max_f_violation_MP}
 \max_{\Omega} f > 0 \ge \max_{\partial \Omega} f.
\end{equation}
On the other hand  $f$ satisfies the elliptic equation \eqref{pde_f_Z}, therefore \eqref{max_f_violation_MP} violates the maximum principle.

\end{proof}

\section{The structure of $\{H=0\}$}\label{section_H}

In this section we study the zero set of the mean curvature of $\Sigma$. 

\begin{remark}\label{remark_H_manifold}
On a translater $\Sigma$, the mean curvature  $H$ solves the following equation:
\begin{equation}\label{pde_H}
\Delta^\Sigma H  + \langle \nabla^\Sigma H, e_3 \rangle + |A|^2 H = 0, 
\end{equation}
see for instance Lemma 2.1 in \cite{mss15}.
As in the proof of Lemma \ref{structure_Z}, one can readily check that $e^{\frac{x_3}{2}}H$ satisfies the equation (without first order term):
$$
\Delta^\Sigma (e^{\frac{x_3}{2}} H ) = \lef e^{\frac{x_3}{2}} H \rig h,
$$
for some smooth function $h$. Observe that the zero set of $e^{\frac{x_3}{2}} H$ coincides with $\{H = 0\}$. If $\Sigma$ is not flat, from Theorem 2.5 in \cite{ch76}, we have that it is a union of $1$-dimensional $C^2$-manifolds and the singular points (namely the intersection points of such $1$-dimensional manifolds) are isolated.

\end{remark}

\begin{lemma}\label{lemma_no_planar}
Let $\Sigma^2 \subseteq \R^3$ be a complete translater, such that the unit normal vector field $\nu$ is constant along $\{H = 0\} $. 

Then $\Sigma$ is mean convex.
\end{lemma}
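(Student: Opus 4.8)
The plan is to argue by contradiction: suppose $\nu$ is constant along $\{H=0\}$, say $\nu \equiv V$ on $\{H=0\}$, but $\Sigma$ is not mean convex, so $H$ changes sign. If $\Sigma$ is flat it is a vertical plane, which is mean convex, so assume $\Sigma$ is not flat; then by Remark \ref{remark_H_manifold} the set $\{H=0\}$ is a union of $C^2$-curves meeting at isolated singular points. First I would observe that since $H(p)=0$ forces $T_p\Sigma$ to be vertical (by the translater equation \eqref{translating_equation}), the hypothesis $\nu\equiv V$ says that $T_p\Sigma$ is the \emph{same} vertical plane $P = V^\perp$ for every $p\in\{H=0\}$. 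Fix a point $p_0\in\{H=0\}$, set $P = T_{p_0}\Sigma$, and consider the function $f(x) = \langle V, x - p_0\rangle$ on $\Sigma$, whose zero set is $Z = \Sigma\cap P$ and which satisfies the elliptic equation \eqref{pde_f_Z}, namely $\Delta^\Sigma f + \langle\nabla^\Sigma f, e_3\rangle = 0$.

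The key point is that $\{H=0\}\subseteq Z$: if $H(p)=0$ then $T_p\Sigma = P$, so $p$ lies in the affine plane through $p_0$ with normal $V$ (here one uses that $\{H=0\}$ is connected, or argues component-by-component, to rule out parallel translates of $P$ — along a connected arc of $\{H=0\}$ the tangent plane is constantly $P$, hence the arc is a straight-line segment in a fixed translate of $P$, and continuity pins down which translate; taking $p_0$ on that component makes it $P$ itself). Moreover at each such $p$ we have $\nabla^\Sigma f(p) = V^\top = 0$ since $V = \nu(p)$, so $f$ has vanishing order $m\ge 2$ at every point of $\{H=0\}$. By Lemma \ref{structure_Z} (see Remark \ref{remark_Z}), near each $p\in\{H=0\}$ the set $Z$ consists of at least two $C^2$-arcs crossing transversally at $p$.

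Now I would extract the contradiction from the sign change of $H$ together with the structure of nodal sets. Differentiating \eqref{translating_equation}, or directly: $H = -\langle\nu, e_3\rangle$ and along $Z$ one computes the normal derivative of $f$ relates to $\langle\nu,e_3\rangle$; more cleanly, consider that $Z$ locally separates $\Sigma$ into regions where $f>0$ and $f<0$, i.e. where $\langle V, \nu\rangle$ has a definite sign near a point of $\{H=0\}$ — wait, the honest route is: the function $g := \langle V,\nu\rangle$ on $\Sigma$ vanishes exactly where $\nu\perp V$, i.e. where $T_p\Sigma \ni V$... The cleanest argument I expect to work: since $\{H=0\}\subseteq Z$ and $H$ changes sign, the two sides of some nodal arc of $Z$ near a point $p\in\{H=0\}$ must carry opposite signs of $H$; but a transverse nodal crossing of the \emph{harmonic-type} function $f$ at $p$ (vanishing order $m\ge2$) forces $f$, hence the geometry of $\Sigma$ relative to $P$, to oscillate in sign across each arc, and one checks via the structure theorem that $\{H=0\}$ cannot coincide with all of $Z$ locally unless $H\equiv 0$ there — then since $\{H=0\}$ is also the nodal set of the Colding-Minicozzi-type equation \eqref{pde_H} for $H$ (with $e^{x_3/2}H$ solving a first-order-free elliptic equation), $H\equiv0$ on an open set would propagate by unique continuation to $\Sigma$, making $\Sigma$ a minimal translater, i.e. a vertical plane — contradicting non-flatness, or conversely giving mean convexity trivially.

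\textbf{Main obstacle.} The delicate step is the comparison of the nodal set $\{H=0\}$ of $H$ (governed by \eqref{pde_H}) with the nodal set $Z$ of $f$ (governed by \eqref{pde_f_Z}) and showing they cannot agree on an open subset of $\{H=0\}$-arcs without forcing $H\equiv0$; this is where one must use the transversality/branching structure from \cite{ch76} carefully, together with unique continuation, rather than a soft argument. I would expect the author's proof to exploit that along each $C^2$-arc of $\{H=0\}$ the translater is ruled by a horizontal line in $P$ (since $T_p\Sigma=P$ is constant there), reducing the local picture to an ODE analysis that pins down $\Sigma$ near the arc and shows $H$ cannot vanish to finite order there unless it vanishes identically.
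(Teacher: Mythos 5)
You set the proof up correctly through the first half: you identify that every $p\in\{H=0\}$ has $T_p\Sigma = V^\perp$ (the same affine plane $P$, by integrating $\langle\gamma'(s),V\rangle=0$ along $\{H=0\}$), hence $\{H=0\}\subseteq Z=\Sigma\cap P$; and you correctly observe that $\nabla^\Sigma f = V^\top$ vanishes at every such $p$, so $f$ has vanishing order $m\ge 2$ there. At that point, though, you stop one step short of the actual contradiction and wander off into an unnecessary and inconclusive detour about sign changes of $H$, unique continuation, and an ODE reduction — you even flag as the ``main obstacle'' an issue that never arises in the paper.

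The finish is immediate from what you have already. By Lemma~\ref{structure_Z}, the set of points of $Z$ where the vanishing order of $f$ is at least $2$ — i.e.\ the singular points of $Z$, where several nodal arcs cross transversally — is a set of \emph{isolated} points. You have just shown that \emph{every} point of $\{H=0\}$ is such a singular point. But by Remark~\ref{remark_H_manifold}, away from finitely/countably many isolated points $\{H=0\}$ is a one-dimensional $C^2$-manifold (and it is nonempty with non-isolated points, since $\Sigma$ is assumed non-flat). So $\{H=0\}$ contains a $C^2$-arc every point of which is a singular point of $Z$, contradicting their isolation. That is the whole argument; the non-mean-convexity of $\Sigma$ is only used implicitly (to rule out the trivial cases $\{H=0\}=\emptyset$ or $\Sigma$ flat), not in any delicate comparison of the nodal sets of $H$ and of $f$. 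Your proposal is therefore incomplete as written — it correctly assembles the ingredients but does not close the argument, and the route you sketch for closing it is not the right one.
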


\begin{proof}
We can assume $\{H = 0\} \ne \emptyset$ and that $\Sigma$ is not flat, otherwise the statement is trivially true. 
Let us assume that $\nu$ is constant along $\{H = 0\}$. Let $V \in \mathbb{S}^2$ be such that $\nu|_{\{H =0\}} \equiv V$. Note that from \eqref{translating_equation} we have that $V \perp e_3$.  From Remark \ref{remark_H_manifold} we have that $\{ H = 0\}$ is a $1$-dimensional smooth manifold away from  a 
 set of isolated points. 

 Let $p \in \{H = 0\}$ be a regular point and let $\gamma \colon (-\varepsilon, \varepsilon) \to \{H = 0\}$ be a regular curve such that $\gamma(0) = p$. Since $\nu$ is constant along $\{H = 0\}$, we have that $T_{\gamma(s)} \Sigma = T_p \Sigma$ and  $\gamma(s) \in T_p \Sigma \cap \Sigma = Z$, for every $s \in (-\varepsilon, \varepsilon)$. 

From Lemma \ref{structure_Z} we have that there exists a neighborhood $p \in U \subseteq \Sigma$ such that $Z \cap U$ is the union of finitely many $C^2$-arcs intersecting transversally at $p$. Moreover, we can assume that $p$ is the only singular point of the $1$-dimensional $C^2$-manifold $Z \cap U$.

Observe that the function $x \mapsto \langle V, x- p \rangle$ has vanishing order $m \ge 2$ at $\gamma(s)$ for every $s \in (-\varepsilon, \varepsilon)$. Therefore, from Lemma \ref{structure_Z}, we have that each point $\gamma(s)$ is a singular point of $\{H = 0\}$ and this is in contradiction with the fact that $p$ is an isolated singular point.
\end{proof}


We conclude this section with the following proposition which  will not be used in the proof of Theorem \ref{theorem_slab}, but is a stand-alone observation.

\begin{proposition}\label{lemma_compactness}
Let $\Sigma^2 \subseteq \R^{3}$ be a   complete translater with only one  end and  assume  $\{H = 0\}$ to be compact.

Then $\{H = 0\}$  is empty. Namely, $\Sigma$ is strictly mean convex.
\end{proposition}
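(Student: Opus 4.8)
The plan is to argue by contradiction: suppose $\{H=0\}$ is a nonempty compact subset of $\Sigma$. By Remark~\ref{remark_H_manifold}, since $\Sigma$ is not flat (a vertical plane has $H\equiv 0$, which is a separate trivial case to dispose of at the start), the set $\{H=0\}$ is a disjoint union of $1$-dimensional $C^2$-manifolds meeting only at isolated singular points; compactness forces this union to be a finite graph, so in particular $\{H=0\}$ is contained in some large ball, hence $\Sigma\setminus\{H=0\}$ has a well-defined behavior at infinity. Because $\Sigma$ has exactly one end, $\Sigma\setminus\{H=0\}$ has exactly one unbounded connected component $\Sigma_\infty$, on which $H$ has a fixed sign; after possibly replacing $\nu$ by $-\nu$ we may assume $H>0$ on $\Sigma_\infty$. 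If there are no other components of $\Sigma\setminus\{H=0\}$ we are immediately in a position to conclude (as below), so the heart of the matter is to rule out a bounded component where $H<0$, or more generally any point of $\Sigma$ where $H<0$.

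First I would set up the maximum-principle machinery. By Remark~\ref{remark_H_manifold} the function $w\coloneqq e^{x_3/2}H$ satisfies $\Delta^\Sigma w = h\,w$ for a smooth $h$, and this is an equation with no zeroth-order sign constraint that is amenable to the strong maximum principle on the (possibly noncompact) pieces of $\Sigma$. The key point is that on the unbounded component $\Sigma_\infty$ we have $w>0$, and on the compact set $\{H=0\}$ we have $w=0$. Suppose, toward a contradiction, that $\{H<0\}\ne\emptyset$. Let $\Omega$ be a connected component of $\{H<0\}$; then $w<0$ on $\Omega$ and $w=0$ on $\partial\Omega\subseteq\{H=0\}$. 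Since $\{H=0\}$ is compact and $\Sigma_\infty$ is the unique unbounded component adjacent to $\{H=0\}$ on its "$H>0$ side," the set $\Omega$ must be bounded (it cannot escape to infinity without crossing $\{H=0\}$ into $\Sigma_\infty$). On the closure $\overline\Omega$, which is compact, $w$ attains its minimum at an interior point, so by the strong maximum principle applied to $\Delta^\Sigma w = h w$ — rewritten as $\Delta^\Sigma w - h^- w = h^+ w \le 0$ where we absorb the good-sign part — $w$ is constant on $\Omega$, forcing $w\equiv 0$, i.e. $H\equiv 0$ on $\Omega$, contradicting $\Omega\subseteq\{H<0\}$. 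Hence $\{H<0\}=\emptyset$, so $H\ge 0$ everywhere.

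Now $H\ge 0$ on all of $\Sigma$ and $H>0$ on the unbounded component $\Sigma_\infty$. Applying the strong maximum principle once more to $\Delta^\Sigma w = h w$ on all of $\Sigma$: if $H(p)=0$ for some $p$, then $w$ attains an interior minimum ($w\equiv 0$ is its minimum value) at $p$, and the Hopf/strong maximum principle (again writing the PDE so that the zeroth-order coefficient has the correct sign at the minimum, or simply using that $w\ge 0$ and $\Delta^\Sigma w = hw$ with $w(p)=0$ gives a local contradiction via the standard argument for $Lu = 0$, $u\ge 0$, $u(p)=0$) forces $w\equiv 0$ on the whole connected surface $\Sigma$, contradicting $w>0$ on $\Sigma_\infty$. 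Therefore $\{H=0\}=\emptyset$ and $\Sigma$ is strictly mean convex.

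The main obstacle is the step where I claim that a component $\Omega$ of $\{H<0\}$ must be bounded — this is where the hypotheses "one end" and "$\{H=0\}$ compact" are both genuinely used, and it requires care with the topology of $\Sigma\setminus\{H=0\}$: one must verify that removing a compact finite graph from a one-ended surface leaves exactly one unbounded component, and that this component carries a single sign of $H$, so that any sign change is confined to bounded regions. Once that topological picture is pinned down, the two invocations of the strong maximum principle for $\Delta^\Sigma w = hw$ are routine. A secondary technical point is justifying the maximum principle in the noncompact final step; but since we only need it on the compact $\overline\Omega$ in the contradiction step and then as a unique-continuation / strong-minimum statement for the nonnegative solution $w$ on connected $\Sigma$, no growth hypothesis is needed beyond what Remark~\ref{remark_H_manifold} already provides.
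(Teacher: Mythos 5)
Your topological setup is sound: under the one-end and compactness hypotheses, $\Sigma\setminus\{H=0\}$ has exactly one unbounded component $\Sigma_\infty$, and after possibly flipping $\nu$ one may take $H>0$ on $\Sigma_\infty$, so every component of $\{H<0\}$ is bounded. Your final step — that $H\ge 0$ with an interior zero forces $H\equiv 0$ on connected $\Sigma$ — is also correct, since for a nonnegative solution vanishing at an interior point the sign of the zeroth-order coefficient is irrelevant (absorb a large constant multiple of $w$, as you indicate).

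The gap is in the middle step, where you claim that a bounded component $\Omega$ of $\{H<0\}$ with $w=e^{x_3/2}H<0$ in $\Omega$ and $w=0$ on $\partial\Omega$ is ruled out because \emph{``$w$ attains its minimum at an interior point, so by the strong maximum principle ... $w$ is constant.''} This is not a valid application of the strong maximum principle. The equation $\Delta^\Sigma w = hw$ has a zeroth-order coefficient of unrestricted sign (equivalently, \eqref{pde_H} reads $\Delta^\Sigma H + \langle\nabla^\Sigma H,e_3\rangle = -|A|^2H$, whose zeroth-order coefficient $|A|^2\ge 0$ has the \emph{wrong} sign for ruling out an interior negative minimum). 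Your splitting $h=h^+-h^-$ produces $\Delta^\Sigma w + h^- w = h^+ w\le 0$, but the coefficient $+h^-\ge 0$ is exactly the sign the maximum principle does \emph{not} tolerate at a strictly negative interior minimum. Indeed, this configuration is the generic behavior of a first Dirichlet eigenfunction of $\Delta + V$ with $V\ge 0$: a one-signed function vanishing on the boundary. Nothing about the equation prevents such a nodal domain, so this step is a genuine obstruction, not a technicality. (The noncompact examples of sign-changing $H$ — e.g.\ the pitchfork translater — illustrate that the PDE alone cannot exclude nodal domains; what makes the compact case work is geometry, not the equation.)

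The paper's proof avoids this entirely. It observes that one of $\{H\ge 0\}$ or $\{H\le 0\}$ is a \emph{compact} translater with boundary $\partial\Omega=\{H=0\}$, notes from Remark~\ref{remark_H_manifold} that the singular set of $\partial\Omega$ is finite, and then slides a family of vertical planes $P_{V,t}$ until first contact. An interior first contact contradicts the separating tangency principle for translaters; a boundary first contact at a regular point $p$ forces $T_p\Sigma = P_{V,t^*}$ (since $H(p)=0$ makes $T_p\Sigma$ vertical) and contradicts the boundary version of the tangency principle; and $V$ can be chosen so the contact avoids the finitely many singular boundary points. If you want to salvage your approach, you would have to replace the interior-minimum maximum-principle step with a barrier argument of this kind on your bounded component $\Omega$ (which is again a compact translater with boundary), at which point you have essentially reproduced the paper's proof.
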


\begin{proof}
Let us assume by contradiction that $\{ H = 0\}$ is compact and non-empty.
From Remark \ref{remark_H_manifold}, we have that it is a $1$-dimensional smooth manifold away from a closed set of isolated points. Therefore, since we are assuming $\{H = 0\}$ to be compact, the singular set is a union of finitely many points.

 Since $\Sigma$ has one end, we have that either $\{H \ge 0\}$ or $\{H \le 0\}$ is compact. Let us assume without loss of generality that $\Omega \coloneqq \{H \ge 0\}$ is compact. Since $H$ solves the elliptic equation \eqref{pde_H}, as an application of the strong maximum principle  applied to $H$,  we have that the interior $\{H > 0\}$ is non-empty, unless $\Sigma$ is flat.  Observe that $\Omega$ is a compact translater with boundary $\partial \Omega = \{H = 0\}$.

Let $V \in \R^3$ be a vector such that $\langle V, e_3\rangle = 0$. Let $P_V \coloneqq \{ x \in \R^3 \colon \langle V, x \rangle = 0\}$ and let us consider the one parameter family of planes $P_{V, t} \coloneqq P_V + t V$, with $t \in \R$. Since $\Omega$ is compact, there exists $t^* =  t^*(V)$ such that $P_{V, t} \cap \Omega = \emptyset$ for every $t < t^*$ and $P_{V,t^*} \cap \Omega \ne \emptyset$. Let $p \in P_{V, t^*} \cap \Omega$. Observe that $P_{V, t^*}$ is also a translater. Therefore, if $p \in \Omega \setminus \partial \Omega$, we get a contradiction from the separating tangency principle for translaters (Lemma 2.4 in \cite{mo14}). Thus $P_{V, t^*} \cap \Omega \subseteq \partial \Omega$. 

Since $\partial \Omega$ has at most finitely many singular points, we can choose $V$, such that there exists $p \in P_{V, t^*} \cap \Omega \subseteq \partial \Omega$ which is not a singular point.
 From the translater equation \eqref{translating_equation}, we have that the geometric tangent space $T_p\Sigma$ coincides with $P_{V, t^*}$. Since $\partial \Omega$ is regular at $p$, we get a contradiction also in this case from the boundary version of the separating tangency principle (see for instance Theorem 2.1.1 in \cite{pe16}). 
\end{proof}


\begin{remark} 
Observe that if $\Sigma$ has more than one end, then $\{H = 0\}$ can be non-empty and compact. Consider for example the wing-like translater introduced in \cite{css07}.
\end{remark}

\section{Proof of Theorem \ref{theorem_slab}}\label{section_proof}

\begin{proof}
The proof proceeds by contradiction. Let $\Sigma$ be as in the assumptions of Theorem \ref{theorem_slab} and let us assume for contradiction that $\Sigma$ is not mean convex.

Since $\Sigma$ has finite entropy and $|H| \le 1$, Lemma \ref{lemma_finite_entropy_properness} in the Appendix  implies that $\Sigma$ is properly embedded. Therefore, from the results in \cite{cm19} (see Remark \ref{remark_asymptotic_planes}) we have that $\conv(\pi(\Sigma))$ is a strip. Let $\mathcal{S}$ be the slab $\mathcal{S} \coloneqq \pi^{-1}\lef \conv (\pi(\Sigma)) \rig$.

From Lemma \ref{lemma_no_planar}, we can find a point $p \in \{H = 0\}$, such that $T_p \Sigma $ is not parallel to $\partial \mathcal{S}$. Note that $T_p\Sigma$ is a vertical plane, because of \eqref{translating_equation}. Observe that $\mathcal{S} \cap T_p \Sigma$ is a vertical strip on which $x_1$ and $x_2$ are bounded and $x_3$ is unbounded. 
From Lemma~\ref{structure_Z} and Lemma~\ref{refinement_Z}, the set $Z = \Sigma \cap T_p\Sigma$ is the union  of the images of countably many (possibly finitely many) 
 $C^2$-embeddings $\gamma_j \colon \R \to \Sigma$. Each of these $1$-dimensional submanifolds  is properly embedded in $\R^3$ and since the coordinates $x_1$ and $x_2$ are bounded on $Z$, we have  that for each $j$ the two limits $\lim_{t \rightarrow +\infty} x_3( \gamma_j(t))$ and $\lim_{t \rightarrow -\infty} x_3( \gamma_j(t))$ exist and each of them is equal to $+\infty$ or $-\infty$.

In what follows, we use the term ``ray'' to denote a half curve, i.e. to denote $\gamma_j^+ \coloneqq \gamma_j|_{[0, \infty)}$ or  $\gamma_j^- \coloneqq \gamma_j|_{(- \infty, 0]}$.


\textbf{Case 1}: Let us assume that there are at least $3$  rays in $Z$ for which their  $x_3$ coordinates goes to $+ \infty$. We will find a contradiction with the bound on the entropy.  This implies that there are at least three distinct sequences of points $(q^1_k), (q^2_k), (q^3_k) \subseteq Z$ such that 
$$
x_3(q^1_k) = x_3(q^2_k) = x_3(q^3_k) = k
$$
for every sufficiently large $k \in \N$. From Corollary \ref{corollary_of_proposition_refinement}, we can assume
\begin{equation}\label{eq_q_1_2}
\dist(q^1_k, L_1 ) \xrightarrow[k \rightarrow 0]{} 0, \qquad \dist(q^2_k, L_2 ) \xrightarrow[k \rightarrow 0]{} 0,
\end{equation}
where $L_1$ and $L_2$ are the two vertical parallel lines such that $L_1 \cup L_2 = P \cap \partial \mathcal{S}$.  Moreover, since $\pi(P \cap \mathcal{S})$ is compact, we can assume, up to extracting a subsequence, that 
\begin{equation}\label{eq_q_3}
\pi(q^3_k) \xrightarrow[k \rightarrow \infty]{} q
\end{equation}
for some $q \in \pi(P \cap \mathcal{S})$.
 Let us consider the sequence of translaters $(\Sigma_k)$, defined as
$$
\Sigma_k \coloneqq \Sigma - k e_3.
$$
Let us define the sequences $(\tilde{q}_k^i) \subseteq \Sigma_k$, for $i = 1, 2, 3$ as follows:
$$
\tilde{q}^i_k \coloneqq q^i_k - k e_3.
$$
From Proposition \ref{curvature_estimate}, we know that the norm of the second fundamental form of $\Sigma_k$ is uniformly bounded by a constant. Moreover, from \eqref{eq_q_1_2} and from \eqref{eq_q_3}, we have that
$$
\tilde{q}_k^1 \xrightarrow[k \rightarrow \infty]{} \pi(L_1), \qquad \tilde{q}_k^2 \xrightarrow[k \rightarrow \infty]{} \pi(L_2), \qquad \tilde{q}_k^3 \xrightarrow[k \rightarrow \infty]{} q.
$$

 Therefore, by employing a standard Arzelà-Ascoli argument (see for instance Theorem 2.14 in \cite{bgm19}), we have that there exists a properly embedded, not necessarily connected, smooth translater $\Sigma_\infty$, such that, up to a subsequence, we have
$$
\Sigma_k \xrightarrow[k \rightarrow \infty]{C^\infty_{loc}} \Sigma_\infty.
$$
Moreover, we have that $\Sigma_\infty \subseteq \mathcal{S}$, $L_1 \cap \Sigma_\infty \ne \emptyset$ and $L_2 \cap \Sigma_{\infty} \ne \emptyset$. Therefore, from the separating tangency principle, we can conclude that  $\Sigma_{\infty}$ is the following disjoint union
$$
\Sigma_{\infty} = P_1 \cup P_2 \cup \Sigma^{'},
$$
where $P_1$ and $P_2$ are the two vertical parallel planes such that $P_1 \cup P_2 = \partial \mathcal{S}$ and $\Sigma^{'}$ is a complete translater passing through $q$.
Corollary \ref{corollary_additivity} and Remark~\ref{rmk_entropy_1} in the Appendix implies that
$$
\lambda(\Sigma_\infty) = \lambda(P_1) + \lambda(P_2) + \lambda(\Sigma') \ge 3.
$$

Observe that $q$ might coincide with $\pi(L_1)$ or $\pi(L_2)$ and in that case $\Sigma^{'}$ would coincide with $P_1$ or $P_2$, respectively. This is not a problem because  in this situation, the convergence of $\Sigma_k$ to $P_1$ or $P_2$ would be of multiplicity at least $2$. 

Let $\mathcal{B}_R$ denote the ball in $\R^3$ of radius $R> 0$ centered at $0$. Observe that, for any $x_0 \in \R^3 $ and $t_0> 0$ we have
\begin{align}\label{align_f_functional_entropy}
\lambda(\Sigma) & = \lambda(\Sigma_k) \ge F_{x_0, t_0} (\Sigma_k) \ge F_{x_0, t_0}\lef\Sigma_k \cap \mathcal{B}_R \rig.
\end{align} 
The first equality in \eqref{align_f_functional_entropy} follows from the translation invariance of the entropy. 
Taking the limit for $k \rightarrow \infty$ in \eqref{align_f_functional_entropy} and using the fact that $\lim_{k \rightarrow \infty}F_{x_0, t_0} \lef \Sigma_k \cap \mathcal{B}_R\rig = F_{x_0, t_0} \lef \Sigma_{\infty}\cap \mathcal{B}_R\rig$, we obtain
\begin{equation}\label{entropy_R}
\lambda(\Sigma) \ge F_{x_0, t_0} (\Sigma_\infty \cap \mathcal{B}_R).
\end{equation}
Inequality \eqref{entropy_R} holds for every $R>0$,  thus $\lambda(\Sigma)  \ge F_{x_0, t_0} (\Sigma_\infty)$. After taking  the supremum over $x_0 \in \R^3$ and $t_0> 0$, we finally obtain the following contradiction
$$
3 > \lambda(\Sigma) \ge \lambda(\Sigma_\infty) \ge 3.
$$

\textbf{Case 2}: Let us now assume that there are at most $2$ rays such that their $x_3$ coordinate goes to $+ \infty$. From Corollary \ref{corollary_of_proposition_refinement}, we know that $x_3$ can not be bounded from above on $Z$. Therefore there is at least one ray in $Z$ on which $x_3$ goes to $+\infty$. 

In what  follows $\mathcal{H}^+$ and $\mathcal{H}^-$ are the two open halfspaces with boundary $T_p\Sigma$, namely  
\begin{align*}
\mathcal{H}^+ \coloneqq \{ x \in \R^3 \colon \langle x - p, \nu(p) \rangle > 0\}, \\ \mathcal{H}^- \coloneqq \{ x \in \R^3 \colon \langle x - p, \nu(p) \rangle < 0\}.
\end{align*}
Moreover, $\overline{\mathcal{H}}^+$ and $\overline{\mathcal{H}}^-$ will denote the closure of $\mathcal{H}^+$ and $\mathcal{H}^-$ respectively. 

Let $U$ be a neighborhood of $p$ in $\Sigma$  as in Lemma \ref{structure_Z}. Therefore, 
$$
Z \cap U =\bigcup_{j=1}^m \Gamma_j,
$$ 
where $\Gamma_j$ are $C^2$-arcs meeting transversally at $p$ and $m \ge 2$ (see Remark \ref{remark_Z}). We can choose $U$ such that  each $\Gamma_j$ divides $U$ into two connected components. 
From Lemma \ref{corollary_topology}, the arcs $\Gamma_j$ intersect pair-wise only at $p$. 

Moreover, we can assume $U$ to be the graph of a function $u \colon B   \to \R$ for some ball $B \subseteq T_p \Sigma$. From the discussion above and from the separating tangency principle, $U  \setminus  Z$ is the union of $2m$ connected components $U^+_1, \dots,U^+_m, U^-_1, \dots, U^-_m $, where $U^+_j \subseteq \mathcal{H}^+$ and $U^-_j \subseteq \mathcal{H}^-$. 

We denote by ${\Omega}^+_j$   the connected component of $\Sigma \cap \mathcal{H}^+$ containing $U^+_j$ and similarly, we denote by $\Omega^-_j$ the connected component of $\Sigma \cap \mathcal{H}^-$ containing $U^-_j$.

Observe that from Lemma \ref{refinement_Z} and Lemma \ref{corollary_topology}, it follows that if $j \ne k$,  $U^\pm_j$ and $U^\pm_k$ belong to two distinct connected components of $\Sigma \cap \mathcal{H}^\pm$. In other words $\Omega^+_1, \dots, \Omega^+_m, \Omega^-_1, \dots, \Omega^-_m$ are all distinct. 
Moreover observe that from Lemma \ref{refinement_Z} we have that 
\begin{equation}\label{eq_boundary_omega}
\partial \Omega^+_j \cap \partial \Omega^+_k = \{p\}, \qquad \partial \Omega^-_j \cap \partial \Omega^-_k = \{p\},
\end{equation}
 for $j \ne k$.
Moreover, from Corollary \ref{corollary_of_proposition_refinement}, we have 
\begin{equation}\label{x_3_boundary_omega}
\sup_{\partial \Omega_j^+} x_3 = +\infty, \qquad \sup_{\partial \Omega_j^-} x_3 = +\infty,
\end{equation}
for $j = 1, \dots, m$.

Let $\widetilde{Z}$ be the connected component of $Z$ containing $p$. We will now  distinguish the following subcases.
\begin{enumerate}[\bfseries (a)]
\item The $x_3$ coordinate is bounded from above on $\widetilde{Z}$.  
\item $\widetilde{Z}$ contains one ray such that the $x_3$ coordinate goes to $+ \infty$.
\item $\widetilde{Z}$ contains two rays such that the $x_3$ coordinate goes to $+ \infty$.
\end{enumerate}

\textbf{(a)} Let us assume the coordinate $x_3$ to be bounded from above on $\widetilde{Z}$.  Since we are in \textbf{Case 2}, there can be at most 2 connected components of $Z$ for which $x_3$ goes to $+\infty$.  Note that \eqref{eq_boundary_omega} and \eqref{x_3_boundary_omega}, together with the fact that $m \ge 2$, imply that, in fact, $m = 2$ and  there are exactly 2 distinct connected components $Z_1$ and $Z_2$ of $Z$  on which $x_3$ goes to $+\infty$ and such that 
\begin{equation}\label{christmas_1}
Z_1 \subseteq \partial {\Omega}^+_1  \quad \text{and} \qquad Z_1 \subseteq \partial {\Omega}^-_1
\end{equation}
and 
\begin{equation}\label{christmas_2}
Z_2 \subseteq \partial {\Omega}^+_2 \quad \text{and} \qquad Z_2 \subseteq \partial {\Omega}^-_2.
\end{equation}
But this is in contradiction with the fact that $\Sigma$ is simply connected. Indeed, we can construct a loop in $\Sigma$ with base point $p$ which is not homotopically trivial as follows: let $\delta_1 \colon [0, l_1] \to \Sigma$ be a regular curve such that $\delta_1(0) = p$ and $\delta_1(l_1) \in Z_1$ and $\delta_1(t) \in  {\Omega}_1^+$ for $0 < t <l_2$. Let $\delta_2 \colon [0, l_2] \to \Sigma$ be another regular curve connecting $Z_1$ and $\{p\}$, such that  $\delta_2(0) = \delta_1(l_1)$, $\delta_2(l_2) = p$ and such that $\delta_2(t) \in{\Omega}_1^-$. Let $\delta = \delta_1 * \delta_2$ be the concatenation of $\delta_1$ and $\delta_2$. Observe that the existence of $\delta_1$ and $\delta_2$ is guaranteed by \eqref{christmas_1}. It is immediate to see that $\delta$ is not homotopically trivial, because $Z_1$ and $\widetilde{Z}$ are two distinct connected components of $Z$.

\textbf{(b)} Let us assume that $\widetilde{Z}$ contains one ray, such that the $x_3$ coordinate goes to $+ \infty$. One can find again a contradiction with a similar argument as in the subcase (a).

\textbf{(c)} Let us assume $\widetilde{Z}$ contains two rays $r_1$ and $r_2$, such that the $x_3$-coordinate goes to $+ \infty$. Since we are in \textbf{Case 2}, this implies that $x_3$ is bounded on all the other connected components of $Z$. 
For the sake of clarity, let us assume that both  rays are emanating from $p$ (it is easy to deal with the general case). Namely, let us assume that $r_i \colon [0, \infty) \to Z$ and $r_i(0) = p$ for $i = 1, 2$. Note that there cannot be any other ray "between" them, otherwise its $x_3$-coordinate would have to go to $+\infty$, violating the hypothesis of  subcase (c). Therefore, one of the connected components $U^+_1, \dots, U^+_m, U^-_1, \dots, U^-_m$ must have $r_1 \cap U$ and $r_2 \cap U$ as boundary in $U$. W.l.o.g., \!\!  let us assume $\partial U^+_1 = (r_1 \cup r_2) \cap U$. Observe that \eqref{eq_boundary_omega} implies $\partial {\Omega}_j^+ \cap (r_1 \cup r_2) = \{p\}$ for every $j = 2, \dots, m$. Therefore, $x_3$ is bounded from above on $\partial {\Omega}_j^+$ for $j = 2, \dots, m$ but this contradicts \eqref{x_3_boundary_omega}.

%
%

\end{proof}

\appendix
\section{Colding-Minicozzi's entropy}\label{appendix}


Let  $\Sigma^n \subseteq \R^{n+k}$ be a submanifold. Following \cite{cm12}, given $x_0 \in \R^{n+k}$ and $t_0 > 0$, the functional $F_{x_0, t_0}$ is defined as follows
\begin{equation}\label{F-functional}
F_{x_0, t_0} (\Sigma) \coloneqq \frac{1}{\lef 4 \pi t_0 \rig^{\frac{n}{2}}} \int_{\Sigma} e^{-\frac{\|x- x_0\|^2}{4 t_0}} \, d\mu(x). 
\end{equation}

Then the \emph{entropy} functional $\lambda(\Sigma)$ is defined as follows (see also \cite{mm09}):
\begin{equation}\label{entropy}
\lambda(\Sigma) \coloneqq \sup_{x_0 \in \R^{n+k}, \, t_0 > 0} F_{x_0, t_0}(\Sigma).
\end{equation}
The functionals $F_{(x_0, t_0)}$ and the entropy functional, naturally extend to Radon measures. 

\begin{remark}\label{rmk_entropy_1}
Observe that for any $n$-dimensional submanifold $\Sigma^n \subseteq \R^{n+k}$ we have the bound $\lambda(\Sigma) \ge 1$. The equality is reached if $\Sigma$ is a flat $n$-plane.
\end{remark}

An important feature of the entropy functional is that it is monotonically nonincreasing along a mean curvature flow. This is a consequence of Huisken's monotonicity formula \cite{hu90}. 

\begin{remark}\label{remark_entropy_area_growth}
For any  submanifold $\Sigma^n \subseteq \R^{n+k}$, having finite entropy is equivalent to having bounds on area growth. See for instance Theorem~2.2 in \cite{su18}. In particular there exists a constant $C$ such that for every $x \in \R^{n+k}$ and for every $R > 0$, we have
\begin{equation}\label{equation_area_growth}
\text{Area}\lef \Sigma \cap \mathcal{B}_R(x) \rig \le C \lambda(\Sigma) R^n,
\end{equation}
where $\mathcal{B}_R(x)$ is the open ball in $\R^{n+k}$ of radius $R> 0$ centered at $x$.
\end{remark}

\begin{lemma}\label{lemma_finite_entropy_properness}
Let $\Sigma^n \subseteq \R^{n+k}$ be a complete, noncompact, immersed and oriented submanifold. Let us assume that it has finite entropy $\lambda(\Sigma) < \infty$ and that the mean curvature $H$ is bounded, namely $|H| \le C$ for some constant $C > 0$.

Then $\Sigma$ is properly immersed.
\end{lemma}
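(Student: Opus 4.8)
The plan is to show that the immersion $\Sigma^n \to \R^{n+k}$ is proper by verifying that the preimage of every closed ball is compact, and the key tool will be the local area bound coming from finite entropy together with a monotonicity-type argument for submanifolds with bounded mean curvature. First I would recall that, by Remark \ref{remark_entropy_area_growth}, finite entropy is equivalent to a uniform Euclidean-type area bound $\text{Area}(\Sigma \cap \mathcal{B}_R(x)) \le C\lambda(\Sigma) R^n$ for all $x \in \R^{n+k}$ and all $R > 0$; in particular the area of $\Sigma$ inside any fixed compact set of $\R^{n+k}$ is finite. The strategy is then to run a clearing-out / density argument: for a submanifold with $|H| \le C$, there is a monotonicity formula of the form that $e^{C r} r^{-n}\,\text{Area}(\Sigma \cap \mathcal{B}_r(x))$ (or a similar quantity with the mean curvature contribution absorbed) is monotone in $r$, which forces a uniform lower density bound $\text{Area}(\Sigma \cap \mathcal{B}_r(x)) \ge c_n r^n$ at every point $x$ lying in the closure of the image of $\Sigma$, for all sufficiently small $r$ depending only on $n$ and $C$.

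The core of the argument is then a counting step. Suppose for contradiction that $\Sigma$ is not properly immersed. Then there is a closed ball $\overline{\mathcal{B}_R(x_0)}$ whose preimage under the immersion is noncompact; since $\Sigma$ is complete, this means there is a divergent sequence of points $p_j \in \Sigma$ (escaping every compact subset of $\Sigma$) whose images stay in $\overline{\mathcal{B}_R(x_0)}$, and hence accumulate at some point $y \in \overline{\mathcal{B}_R(x_0)}$. Because the $p_j$ eventually leave every compact subset of $\Sigma$ while their images converge to $y$, one can extract infinitely many of them that are pairwise "far apart" in the intrinsic metric of $\Sigma$ — more precisely, so that small intrinsic geodesic balls around them are disjoint in $\Sigma$ — yet all their images lie in a fixed small extrinsic ball $\mathcal{B}_\rho(y)$. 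Applying the lower density bound to each such piece (after possibly further shrinking $\rho$ so each intrinsic ball maps into $\mathcal{B}_\rho(y)$ with controlled geometry) produces infinitely many disjoint subsets of $\Sigma \cap \mathcal{B}_\rho(y)$ each of area $\ge c_n \rho^n / 2$, so $\text{Area}(\Sigma \cap \mathcal{B}_\rho(y)) = \infty$. This contradicts the finite-area bound from the previous paragraph, and the contradiction establishes properness.

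I expect the main obstacle to be making the extraction of "intrinsically separated" sheets rigorous: one must be careful that a sequence escaping to infinity in $\Sigma$ whose images accumulate at $y$ genuinely produces infinitely many sheets contributing definite, disjoint chunks of area near $y$, rather than, say, a single sheet wrapping around. This is handled by using completeness of $\Sigma$ (so intrinsic geodesic balls of a fixed small radius are defined) and the fact that, by bounded $|H|$, the monotonicity formula applies uniformly, giving the lower area bound on each sheet independently of which sheet it is; the disjointness of the intrinsic balls in $\Sigma$ then transfers to genuine additivity of area in the ambient ball. The bounded mean curvature hypothesis enters precisely to make the monotonicity formula and the resulting uniform lower density bound available — this is the standard route (cf. the arguments behind Theorem 2.2 in \cite{su18}) — and finite entropy enters only through the matching uniform upper area bound.
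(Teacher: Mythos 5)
Your proposal follows essentially the same strategy as the paper's proof: assume non-properness, extract a sequence $(p_j)\subseteq\Sigma$ whose ambient images accumulate in a fixed ball but which are pairwise intrinsically $2\delta$-separated, obtain a definite lower bound on the area of each (disjoint) intrinsic geodesic ball $B^\Sigma_\delta(p_j)$ using bounded mean curvature, and conclude that $\Sigma$ has infinite area in that fixed ambient ball, contradicting the Euclidean area growth implied by finite entropy. The paper supplies the per-ball lower bound by simply citing Theorem~2.1 of Cheung--Leung \cite{cl98}, which is formulated directly for \emph{intrinsic} geodesic balls of complete immersed submanifolds with bounded mean curvature.

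The one place where your sketch should be tightened is exactly the step you flag as the ``main obstacle.'' You invoke ``the monotonicity formula'' in the form that $e^{Cr}r^{-n}\,\mathrm{Area}(\Sigma\cap\mathcal{B}_r(x))$ is monotone in $r$, which is the \emph{extrinsic} monotonicity formula. That inequality lower-bounds the area of \emph{all} of $\Sigma$ inside an extrinsic ball; it does not, as stated, give a lower bound on the contribution of a single intrinsic geodesic ball $B^\Sigma_\delta(p_j)$. If you try to apply it to the surface-with-boundary $B^\Sigma_\delta(p_j)$, the divergence-theorem step picks up a term over $\partial B^\Sigma_\delta(p_j)\cap\mathcal{B}_r(p_j)$, and since intrinsic distance only dominates extrinsic distance (not vice versa), that boundary can re-enter every ambient ball $\mathcal{B}_r(p_j)$, however small $r$ is; so no amount of ``shrinking $\rho$ for controlled geometry'' removes it without curvature bounds you do not have. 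The clean fix is to run the divergence-theorem argument with a cutoff in the \emph{intrinsic} distance $\rho=\mathrm{dist}^\Sigma(\cdot,p_j)$ (so the cutoff is compactly supported in the interior of $B^\Sigma_\delta(p_j)$ and no boundary term appears); this yields $\mathrm{Vol}(B^\Sigma_\delta(p_j))\geq c(n,C)\,\delta^n$ directly and is in spirit the Cheung--Leung argument the paper cites. With that adjustment the rest of your counting argument and the contradiction with Remark~\ref{remark_entropy_area_growth} go through exactly as written.
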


This result in particular applies to translating solitons, since they have bounded mean curvature. Note that we do not put any restriction on the codimension $k$. The proof is essentially a corollary of Theorem 2.1 in  \cite{cl98}.

\begin{proof}[Proof of Lemma \ref{lemma_finite_entropy_properness}]
Let $\Sigma^k \subseteq \R^{n+k}$ be a complete, immersed and oriented $k$-dimensional submanifold and let us assume that it is not properly immersed. This implies that there exist $x \in \R^{n+k}$ and a sequence $(p_j)_j \subseteq \Sigma$ such that 
$$
\|p_j - x\|_{\R^{n+k}} \xrightarrow[j \rightarrow \infty]{} x
$$
and such that there exists $\delta > 0$ such that 
$$
\dist^\Sigma(p_j, p_i) \ge 2\delta, \qquad j\ne i,
$$
where $\dist^\Sigma(\cdot, \cdot)$ denotes the intrinsic distance of $\Sigma$.  

Let ${B}^{\Sigma}_\delta(p_j)$ denote the intrinsic geodesic ball of $\Sigma$ of radius $\delta$, centered at $p_j$. 
From Theorem 2.1 in \cite{cl98}, we have that there exists a constant $\beta > 0$ such that 
$$
\mathcal{H}^n\lef {B}^{\Sigma}_\delta(p_j) \rig \ge \beta \delta
$$
for every $j \in \N$, where $\mathcal{H}^n$ denotes the $n$-dimensional Hausdorff measure. Let $\mathcal{B}_R(x)$ be the ball in $\R^{n+k}$ of radius $R> 0$ centered at $x$. Take $R$ large enough such that ${B}^{\Sigma}_\delta(p_j) \subseteq  \mathcal{B}_R(x) $ for every $j$. Then we have
$$
\mathcal{H}^n\lef \Sigma \cap \mathcal{B}_R(x) \rig \ge \sum_{j = 1}^\infty \mathcal{H}^n\lef {B}^{\Sigma}_\delta(p_j)\rig \ge \sum_{j = 1}^\infty \beta \delta =  +\infty.
$$

Therefore 

\begin{align*}
\lambda(\Sigma) &\ge F_{x, 1}(\Sigma) = \frac{1}{(4 \pi t)^{\frac{n}{2}}} \int_{\Sigma} e^{-\frac{\|y - x\|^2}{4}} \, d\mu(y) \\
&\ge \frac{e^{-\frac{R^2}{4}}}{(4 \pi t)^{\frac{n}{2}}} \mathcal{H}^n\lef \Sigma \cap \mathcal{B}_R(x) \rig = +\infty.
\end{align*}
\end{proof}

The entropy of a translater is determined by its asymptotic behavior. More precisely, we have the following explicit way for computing the entropy.
\begin{lemma}
Let $\Sigma^n \subseteq \R^{n+1}$ be a translater with finite entropy. Then 
$$
\lambda(\Sigma) = \lim_{\tau \rightarrow \infty} F_{(0, 1)} \lef \frac{1}{\tau}\Sigma - \tau e_{n+1} \rig.
$$
\end{lemma}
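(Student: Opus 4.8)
The plan is to reduce the statement, via the elementary scaling and translation behaviour of the functionals \eqref{F-functional}, to Huisken's monotonicity formula for the mean curvature flow generated by $\Sigma$. First I would record the two identities $F_{x_0,t_0}(A+v)=F_{x_0-v,\,t_0}(A)$ and $F_{x_0,t_0}(\mu A)=F_{\mu x_0,\,\mu^2 t_0}(A)$. Applying them with $\mu=1/\tau$ and $v=-\tau e_{n+1}$ gives $F_{(0,1)}\big(\tfrac1\tau\Sigma-\tau e_{n+1}\big)=F_{\tau^2 e_{n+1},\,\tau^2}(\Sigma)$, so, writing $s=\tau^2$, it suffices to prove that $\lambda(\Sigma)=\lim_{s\to\infty}F_{s e_{n+1},\,s}(\Sigma)$. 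Note also that since $\Sigma$ is a translater it has $|H|\le 1$, so by Lemma~\ref{lemma_finite_entropy_properness} it is properly immersed, and therefore Huisken's monotonicity formula is available for the mean curvature flow $t\mapsto\Sigma+t e_{n+1}$ (which is a flow precisely because $\Sigma$ is a translater).

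For the inequality ``$\ge$'' I would apply the monotonicity formula to the flow $\mathcal M_t=\Sigma+t e_{n+1}$ with centre $0$ and blow-up time $0$: the quantity $F_{0,-t}(\mathcal M_t)=F_{-t e_{n+1},\,-t}(\Sigma)$ is non-increasing in $t$ for $t<0$, which, after the substitution $s=-t$, says exactly that $s\mapsto F_{s e_{n+1},\,s}(\Sigma)$ is non-decreasing on $(0,\infty)$. Since each term is bounded above by $\lambda(\Sigma)$ by definition \eqref{entropy}, the limit $L:=\lim_{s\to\infty}F_{s e_{n+1},\,s}(\Sigma)$ exists and $L\le\lambda(\Sigma)$.

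For the reverse inequality I must show $L\ge F_{x_0,t_0}(\Sigma)$ for every $x_0=(x_0',a)\in\R^n\times\R$ and every $t_0>0$, which gives $L\ge\lambda(\Sigma)$ after taking the supremum. Applying the monotonicity formula to the same flow with centre $x_0$ and blow-up time $t_0$ and comparing the value at time $t<0$ with the value at time $0$ yields $F_{(x_0',\,a-t),\,t_0-t}(\Sigma)\ge F_{x_0,t_0}(\Sigma)$ for all $t<0$. Letting $t\to-\infty$ and setting $s=t_0-t\to\infty$, the Gaussian in this functional is centred at $(x_0',\,s+(a-t_0))$ with scale $s$, so its centre differs from $s e_{n+1}$ by the fixed horizontal vector $x_0'$ and the fixed vertical shift $a-t_0$, both negligible compared with the diverging scale $\sqrt s$. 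A standard comparison — splitting $\Sigma$ into the region $\{\|x'\|\le R\sqrt s\}$, where the ratio of the two heat kernels tends uniformly to $1$, and its complement, whose $F$-mass is controlled by the Euclidean area bound $\mathrm{Area}(\Sigma\cap\mathcal B_R(x))\le C\lambda(\Sigma)R^n$ of Remark~\ref{remark_entropy_area_growth} — then shows $\lim_{s\to\infty}F_{(x_0',\,s+(a-t_0)),\,s}(\Sigma)=L$, and hence $L\ge F_{x_0,t_0}(\Sigma)$.

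I expect the main obstacle to be exactly this last comparison: one needs that shifting the centre of the Gaussian weight by a bounded amount while the scale diverges does not change the limiting value of $F(\Sigma)$, and turning this heuristic into a uniform estimate is where the finite-entropy area-growth bound is genuinely used, to discard the contribution of the part of $\Sigma$ far from the vertical axis. Everything else is a formal consequence of Huisken's monotonicity formula together with the defining property of a translater.
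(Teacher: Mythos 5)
Your proposal is correct and follows essentially the same route as the paper: Huisken's monotonicity formula applied to the translating flow shows that $\tau\mapsto F_{(y+\tau e_{n+1},\,t+\tau)}(\Sigma)$ is non-decreasing and bounded by $\lambda(\Sigma)$, and the reverse inequality comes from the fact that the resulting limit does not depend on the base point $(y,t)$. Your careful treatment of that last independence step (splitting into a parabolic ball about the moving centre, where the two Gaussian kernels are uniformly comparable, and a far region controlled by the finite-entropy area-growth bound) is precisely the detail the paper compresses into ``it is easy to check that the limit $\mu(y,t)$ actually is a constant''; just note that the comparison region should also constrain the vertical coordinate, i.e.\ one should work in $\{\,|x-se_{n+1}|\le R\sqrt{s}\,\}$ rather than only $\{\,\|x'\|\le R\sqrt{s}\,\}$, so that the term coming from the bounded vertical shift of the centre is likewise $o(1)$.
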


\begin{proof}
Let $(y, t) \in \R^{n+1} \times \R$. From Huisken's monotonicity formula we have that 
\begin{equation}
F_{(y, t) }\lef \Sigma\rig \le F_{(y + \tau e_{n+1}, t + \tau)} \lef \Sigma \rig,
\end{equation}
for any $\tau > 0$ (see equation (1.9) in \cite{cm12} and Lemma 4.2 in \cite{guang16}). Therefore there exists 
$$
\lim_{\tau \rightarrow \infty} F_{(y + \tau e_{n+1}, t+ \tau)} \lef \Sigma\rig \eqqcolon \mu(y, t).
$$
Let $\varepsilon > 0$ and let $(y_0, t_0) \in \R^{n+1} \times \R$ such that $F_{(y_0, t_0)}(\Sigma) \ge \lambda(\Sigma) - \varepsilon$.
Clearly we have that 
\begin{equation}\label{eq_mu}
\lambda(\Sigma) - \varepsilon \le \mu(y_0, t_0) \le \lambda(\Sigma).
\end{equation} 
Moreover it is easy to check that the limit $\mu(y, t)$ actually is a constant, namely it does not depend on $(y, t)$.  Therefore \eqref{eq_mu} implies that $\mu = \lambda(\Sigma)$.
\end{proof}

\begin{corollary}\label{corollary_additivity}
Let $\Sigma^n_1, \Sigma^n_2 \subseteq \R^{n+1}$ be translaters with finite entropy. 

Then 
\begin{equation}\label{entropy_linear_on_transl}
\lambda(\Sigma_1 + \Sigma_2) = \lambda(\Sigma_1) + \lambda(\Sigma_2),
\end{equation}
where ``$\Sigma_1 + \Sigma_2$'' denotes the sum of Radon  measures naturally induced by $\Sigma_1$ and $\Sigma_2$.
\end{corollary}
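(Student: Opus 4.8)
The plan is to establish the two inequalities separately. The bound $\lambda(\Sigma_1 + \Sigma_2) \le \lambda(\Sigma_1) + \lambda(\Sigma_2)$ is immediate from the linearity of $F_{x_0,t_0}$ in its measure argument: for every $x_0 \in \R^{n+1}$ and every $t_0 > 0$ one has $F_{x_0,t_0}(\Sigma_1 + \Sigma_2) = F_{x_0,t_0}(\Sigma_1) + F_{x_0,t_0}(\Sigma_2) \le \lambda(\Sigma_1) + \lambda(\Sigma_2)$, and taking the supremum over $(x_0,t_0)$ gives the inequality. For the reverse inequality, subadditivity of suprema is not enough, and here I would invoke the preceding Lemma, which expresses the entropy of a finite-entropy translater as the limit of $F_{(0,1)}$ along the single parabolic rescaling $\Sigma \mapsto \tfrac{1}{\tau}\Sigma - \tau e_{n+1}$.

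Concretely, I would first record two elementary facts. First, $F_{(0,1)}$ is additive under sums of Radon measures and commutes with the pushforwards defining the rescaling, so
\[
F_{(0,1)}\Bigl(\tfrac{1}{\tau}(\Sigma_1+\Sigma_2) - \tau e_{n+1}\Bigr) = F_{(0,1)}\Bigl(\tfrac{1}{\tau}\Sigma_1 - \tau e_{n+1}\Bigr) + F_{(0,1)}\Bigl(\tfrac{1}{\tau}\Sigma_2 - \tau e_{n+1}\Bigr).
\]
Second, a change of variables in \eqref{F-functional} shows that the entropy \eqref{entropy} is invariant under dilations and translations of $\R^{n+1}$, hence $\lambda\bigl(\tfrac{1}{\tau}(\Sigma_1+\Sigma_2) - \tau e_{n+1}\bigr) = \lambda(\Sigma_1+\Sigma_2)$ for all $\tau > 0$. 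Combining these with the trivial bound $F_{x_0,t_0}(\mu) \le \lambda(\mu)$, I get for each fixed $\tau > 0$
\[
F_{(0,1)}\Bigl(\tfrac{1}{\tau}\Sigma_1 - \tau e_{n+1}\Bigr) + F_{(0,1)}\Bigl(\tfrac{1}{\tau}\Sigma_2 - \tau e_{n+1}\Bigr) \le \lambda(\Sigma_1+\Sigma_2).
\]
Letting $\tau \to \infty$ and applying the preceding Lemma to $\Sigma_1$ and to $\Sigma_2$ (legitimate since both have finite entropy, so the two limits exist and equal $\lambda(\Sigma_1)$ and $\lambda(\Sigma_2)$), the left-hand side tends to $\lambda(\Sigma_1) + \lambda(\Sigma_2)$, which yields $\lambda(\Sigma_1) + \lambda(\Sigma_2) \le \lambda(\Sigma_1+\Sigma_2)$ and, together with the first step, the identity \eqref{entropy_linear_on_transl}.

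The one genuinely useful ingredient is that the Lemma provides a \emph{universal} family of weights $F_{(0,1)}(\tfrac{1}{\tau}\,\cdot\, - \tau e_{n+1})$ converging to the entropy simultaneously for $\Sigma_1$ and $\Sigma_2$; without it one could only use subadditivity of the supremum, which gives just one of the two inequalities. I expect the remaining work to be entirely routine: checking that $\tfrac{1}{\tau}(\Sigma_1+\Sigma_2) - \tau e_{n+1}$ really is the Radon measure one expects (pushforward commutes with addition of measures), that $F_{(0,1)}$ of it is finite for each $\tau$ (a consequence of $\lambda(\Sigma_i) < \infty$ and Remark \ref{remark_entropy_area_growth}), and that the dilation/translation invariance of $\lambda$ — stated above for submanifolds — transfers verbatim to Radon measures. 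None of this requires anything beyond the definitions in the Appendix.
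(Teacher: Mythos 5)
Your proof is correct and uses the same key ingredient the paper intends — the preceding Lemma, which expresses $\lambda$ as the limit of $F_{(0,1)}$ along the parabolic rescaling $\tfrac{1}{\tau}\Sigma - \tau e_{n+1}$ — combined with linearity of $F_{x_0,t_0}$ in the measure argument. You take a mild but sensible detour: rather than applying the Lemma directly to the Radon measure $\Sigma_1 + \Sigma_2$ (the most direct reading, which requires extending the Lemma from submanifolds to sums of Radon measures), you apply it only to each $\Sigma_i$ separately and close the reverse inequality via scale- and translation-invariance of $\lambda$, which sidesteps that extension cleanly.
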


\begin{remark}
Observe that \eqref{entropy_linear_on_transl} does not hold in general for hypersurfaces which are not translating solitons.
For instance take a hypersurface $\Sigma$ for which the function $(x_0, t_0) \mapsto F_{x_0, t_0}(\Sigma)$ achieves a strict global maximum. This holds true, for instance, for shrinking solitons with polynomial volume growth which do not split off a line isometrically (see Section 7 in \cite{cm12}). Let $V \in \R^{n+1}$ be a nonzero vector and define $\widetilde{\Sigma} \coloneqq \Sigma + V$. Note that the function $(x_0, t_0) \mapsto F_{x_0, t_0}(\Sigma + \widetilde{\Sigma})$ achieves a strict global maximum as well and  $\lambda(\Sigma + \widetilde{\Sigma}) < \lambda(\Sigma) + \lambda(\widetilde{\Sigma})$.
\end{remark}



\end{document}